\newtheorem{theorem}{Theorem}[section]
\newtheorem{lemma}[theorem]{Lemma}
\newtheorem{corollary}[theorem]{Corollary}
\newtheorem{proposition}[theorem]{Proposition}
\theoremstyle{definition}
\theoremstyle{remark}
\numberwithin{equation}{section}
\DeclareMathOperator{\su}{SU}
\DeclareMathOperator{\psu}{PSU}
\DeclareMathOperator{\Hom}{Hom}
\DeclareMathOperator{\nr}{nr}
\DeclareMathOperator{\vol}{vol}
\DeclareMathOperator{\sign}{sign}
\DeclareMathOperator{\kod}{kod}
\newcommand{\CC}{\mathbb C}
\newcommand{\BB}{\mathbb B}
\newcommand{\DD}{\mathbb D}
\newcommand{\PP}{\mathbb P}
\newcommand{\QQ}{\mathbb Q}
\newcommand{\G}{\mathbf G}
\newcommand{\e}{\mathbf e}
\newcommand{\ttt}{\mathbf{sign}}
\newcommand{\A}{{\mathbf A}}
\newcommand{\oo}{\mathfrak o}
\DeclareMathOperator{\gal}{Gal}
\newcommand{\OO}{\mathcal O}
\begin{document}
\title{Arithmetic of a fake projective plane and related elliptic surfaces}
\author{Amir D\v{z}ambi\'c\\ \\ Institut f\"ur Mathematik\\ Johann Wolfgang Goethe-Universit\"at\\ 
Robert-Mayer Str.~6-8, 60325 Frankfurt am Main\\ Email: dzambic@math.uni-frankfurt.de}
\date{}
\maketitle
\begin{abstract}
The purpose of the present paper is to explain the fake projective plane constructed by J.~H.~Keum from the point of view of arithmetic ball quotients. Beside the ball quotient associated with the fake projective plane, we also analize two further naturally related ball quotients whose minimal desingularizations lead to two elliptic surfaces, one already considered by J.~H.~Keum as well as the one constructed by M.~N.~Ishida in terms of p-adic uniformization. 
\end{abstract}

\section{Introduction}
In 1954, F.~Severi raised the question if every smooth complex algebraic surface homeomorphic to the projective plane $\PP_2(\CC)$ is also isomorphic to $\PP_2(\CC)$ as an algebraic variety. To that point, this was classically known to be true in dimension one, being equivalent to the statement that every compact Riemann surface of genus zero is isomorphic to $\PP_1(\CC)$. F.~Hirzebruch and K.~Kodaira were able to show that in all odd dimensions $\PP_n(\CC)$ is the only algebraic manifold in its homeomorphism class. But it took over 20 years until Severi's question could be positively answered. One obtains it as a consequence of S-T.~Yau's famous results on the existence of K\"ahler-Einstein metrics on complex manifolds. Two years after Yau's results, in \cite{mum2}, D.~Mumford discussed the question, if there could exist algebraic surfaces which are not isomorphic to $\PP_2(\CC)$, but which are topologically close to $\PP_2(\CC)$, in the sense that they have same Betti numbers as $\PP_2(\CC)$. Such surfaces are nowadays commonly called {\em fake projective planes}, see \cite{bhpv}. The following characterization of fake projective planes follows immediately from standard results in the theory of algebraic surfaces in combination with above mentioned Yau's result:
\begin{lemma}
\label{fpp}
A smooth algebraic surface $X$ is a fake projective plane if and only if $c_2(X)=3$, $c_1^2(X)=9$, $q(X)=p_g(X)=0$, and $\kod(X)=2$. In particular, the universal covering of $X$ is isomorphic to the unit ball $\BB_2\subset \CC^2$ and consequently 
\begin{equation}
\label{1}
X\cong \Gamma\backslash \BB_2
\end{equation}
where $\Gamma$ is a discrete, cocompact, and torsion free subgroup of $Aut(\BB_2)\cong \psu(2,1)$.  
\end{lemma}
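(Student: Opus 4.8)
The plan is to prove the two directions of the equivalence separately, translating the topological condition ``same Betti numbers as $\PP_2(\CC)$'' into the numerical invariants by means of Hodge theory and Noether's formula, and then to read off the Kodaira dimension from the Enriques--Kodaira classification. First I would record the standard relations valid on any smooth projective (hence K\"ahler) surface: $b_1=2q$, $b_2=2p_g+h^{1,1}$, Poincar\'e duality $b_3=b_1$, $b_0=b_4=1$, together with $c_2=e(X)=\sum_i(-1)^ib_i$, Noether's formula $12\chi(\OO_X)=c_1^2+c_2$, and $\chi(\OO_X)=1-q+p_g$. These turn all of the invariants in the statement into functions of the Betti and Hodge numbers.

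Assuming the four invariants, the implication is short. From $q=p_g=0$ I get $b_1=0$ and $b_2=h^{1,1}$, and then $c_2=b_0-b_1+b_2-b_3+b_4=2+b_2=3$ forces $b_2=1$, so every Betti number agrees with that of $\PP_2(\CC)$ (the value $c_1^2=9$ being automatic from Noether). Since $\kod(X)=2$ means $X$ is of general type, $X$ is not rational and in particular $X\not\cong\PP_2(\CC)$, so $X$ is a fake projective plane.

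For the converse the matching Betti numbers give $b_1=0$, hence $q=0$, and $b_2=1$; because the K\"ahler class lies in $H^{1,1}$ we have $h^{1,1}\ge 1$, so $2p_g+h^{1,1}=1$ forces $p_g=0$ and $h^{1,1}=1$. Then $c_2=3$, and $\chi(\OO_X)=1$ together with Noether yields $c_1^2=9$. The remaining point, which I expect to be the main obstacle, is to show $\kod(X)=2$. Here I would first note that $b_2=1$ gives Picard number $\rho=1$, so $X$ carries no $(-1)$-curve (such a curve would have negative self-intersection, impossible for an effective multiple of the ample generator) and is therefore minimal. Running through the classification of minimal surfaces: the case $\kod=-\infty$ leaves only $\PP_2(\CC)$ among surfaces with $b_2=1$, since minimal ruled surfaces have $b_2\ge 2$, and $\PP_2(\CC)$ is excluded because $X$ is fake; the cases $\kod=0$ and $\kod=1$ both have $c_1^2=0$ and $c_2=12\chi(\OO_X)\in 12\mathbb{Z}_{\ge 0}$, incompatible with $c_2=3$. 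Hence $\kod(X)=2$.

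Finally, for the ``in particular'' statement I would use that $X$ is a minimal surface of general type with $c_1^2=9=3c_2$, i.e.\ attaining equality in the Bogomolov--Miyaoka--Yau inequality $c_1^2\le 3c_2$. By Yau's theorem on the existence of K\"ahler--Einstein metrics, equality holds precisely when the universal cover of $X$ is biholomorphic to the unit ball $\BB_2$, so $X\cong\Gamma\backslash\BB_2$ with $\Gamma=\pi_1(X)\subset\mathrm{Aut}(\BB_2)\cong\psu(2,1)$. That $\Gamma$ is discrete and cocompact is immediate from its being the deck group of a compact quotient, and torsion-freeness follows because the covering is unramified: a nontrivial finite-order automorphism of the bounded domain $\BB_2$ would fix a point (by Cartan's fixed-point theorem applied to the Bergman metric), contradicting freeness of the deck action. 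The genuinely nontrivial input is the equality case of the Miyaoka--Yau inequality, which is exactly Yau's result quoted above; the rest is bookkeeping with the classification.
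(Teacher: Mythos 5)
Your proof is correct and follows exactly the route the paper has in mind: the paper states this lemma without proof, asserting that it ``follows immediately from standard results in the theory of algebraic surfaces in combination with Yau's result,'' and your argument (Hodge theory and Noether's formula to translate Betti numbers into the listed invariants, the Enriques--Kodaira classification plus $\rho=1$ to pin down minimality and $\kod(X)=2$, and the equality case of the Miyaoka--Yau inequality via Yau's theorem for the ball-quotient structure, with Cartan's fixed-point theorem giving torsion-freeness of the deck group) is precisely that standard argument written out in full. No gaps.
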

Here, $c_2(X)$ and $c_1^2(X)$ denote the two Chern numbers of $X$ which are interpreted as the Euler number and the selfintersection number of the canonical divisor respectively, $q(X)$ is the irregularity, $p_g(X)$ the geometric genus, and $\kod(X)$ is the Kodaira dimension of $X$.\\

In the above mentioned work \cite{mum2}, Mumford was also able to show the existence of fake projective planes, constructing an example. However, his construction is based on the theory of p-adic uniformization and his example is not presented in the form (\ref{1}), as one naturally would expect. Moreover, his example is not even a complex surface, but a surface defined over the field of 2-adic numbers $\overline{\mathbb Q}{_2}$. But, p-adic methods were for long time the only way for producing examples of fake projective planes, of which only fnitely many can exist, as pointed out by Mumford. Further examples of p-adic nature have been given by M.~-N.~Ishida and F.~Kato (\cite{Ishida-Kato}), whereas the first complex geometric example seems to be the one constructed by J.~H.~Keum in \cite{keum}. Motivated by the work of M.~N.~Ishida (\cite{Ishida}), the author finds a fake projective plane as a degree 7 (ramified) cyclic covering of an explicitely given properly elliptic surface. Again, as all the examples before, Keum's example is not given as a ball quotient. The breakthrough in the study of fake projective planes came with the recent work of G.~Prasad and S.~Yeung, \cite{PS}, where the authors succeeded to determine all fake projective planes. The main technical tool in their proof is a general volume formula developed by Prasad which is applied to the case of $\su(2,1)$, and combined with the fact that the fundamental group of a fake projective plane is arithmetic. The resulting arithmetic groups are given rather explicitely in terms of Bruhat-Tits theory.\\ 

In the following paper we identify Keum's fake projective plane with a ball quotient $X_{\Gamma^{'} }=\Gamma^{'}\backslash\BB_2$. In fact, this ball quotient appears in \cite{PS} (see \cite{PS}, 5.~9, and there the examples associated with the pair (7,2)). However, in this paper we use a slightly modified approach to this quotient, motivated by \cite{Kat}, who identified Mumford's fake projective plane as a connected component of a certain Shimura variety. Moreover, Mumford's 2-adic example can be considered as a kind of a ``2-adic completion`` of a ball quotient. This ball quotient also appears in \cite{PS} and is also associated to the pair $(7,2)$ (in the sense of \cite{PS}), but this ball quotient is not isomorphic to $X_{\Gamma^{'}}$ since it doesn't admit an automorphism of order $7$.\\ 
Let us briefly describe the approach. We start with an explicit division algebra $D$ over $\QQ$ with an involution of second kind $\iota_b$, a particular maximal order $\OO$, and we consider the arithmetic group $\Gamma=\Gamma_{\OO,b}$ consisting of all norm-1 elements in $\OO$ which are unitary with respect to the hermitian form corresponding to $\iota_b$. Now,  $\Gamma^{'}$ appears as a principal congruence subgroup of index $7$ in $\Gamma$. The explicit knowledge of $\Gamma$ allows us to see particular elements of finite order in $\Gamma$ and gives us the possibility to explain the elliptic surface appearing in \cite{keum} from the point of view of ball quotients, namely as the minimal desingularization of quotient singularities of $X_{\Gamma}=\Gamma\backslash \BB_2$. Passing to a particular group $\tilde\Gamma$ containing $\Gamma$ with index 3, we identify the minimal desingularization of the ball quotient $X_{\tilde\Gamma}$ with the elliptic surface of Ishida (\cite{Ishida}) which is originally given in terms of p-adic uniformization. We illustrate the situation in the following diagram:
\begin{displaymath}
\begin{xy}
 \xymatrix{
 & X_{\Gamma^{'}}\ar[d]_7\\
\widetilde{X_{\Gamma}}\ar[d]\ar[r] & X_{\Gamma}\ar[d]_3\\
\widetilde{X_{\tilde\Gamma}}\ar[r] & X_{\tilde\Gamma}
}
\end{xy}
\end{displaymath}
There, the arrows indicate finite cyclic coverings of compact ball quotients with announced degree, $X_{\Gamma^{'}}$ is a fake projective plane, $X_{\Gamma}$ and  $X_{\tilde\Gamma}$ are singular ball quotients, having only cyclic singularities and $\widetilde{X_{\Gamma}}, \widetilde{X_{\tilde\Gamma}}$ are the canonical resolutions of singularities and are both smooth minimal elliptic surfaces of Kodaira dimension one. Identifying $\widetilde{X_{\tilde\Gamma}}$ with Ishida's elliptic surface in \cite{Ishida}, we know the singular fibers of its elliptic fibration. Explicit knowledge of the finite covering $X_{\Gamma}\longrightarrow X_{\tilde\Gamma}$ gives the elliptic fibration of $\widetilde{X_{\Gamma}}$, already determined by Keum.

\section{Preliminiaries on arithmetic ball quotients}
In this section, we discuss arithmetically defined groups which act properly discontinuously on a symmetric domain isomorphic to the two-dimensional complex unit ball and collect some basic properties of the corresponding locally symmetric spaces.
\subsection{Arithmetic lattices}
If $H$ is a hermitian form over $\CC$ in three variables with two negative and one positive eigenvalue, then we speak of a form with signature $(2,1)$. The set of positive definite lines
\begin{equation}
 \BB_H=\{[l]\in \PP_2(\CC)\mid H(l,l)> 0\}\subset \PP_2(\CC)
\end{equation}
with respect to such a hermitian form $H$ is isomorphic to the two dimensional complex unit ball $\BB_2$. Alternatively, we can see $\BB_H$ as the symmetric space $\BB_H\cong \su(H)/K_{0}$ associated with the Lie group $\su(H)$, that is the group of isometries with respect to H of determinant 1, where $K_{0}$ is a maximal compact subgroup in $\su(H)$. Every cocompact discrete and torsion free subgroup $\Gamma$ of $\su(H)$ acts properly discontinously on $\BB_H$ as a group of linear fractional transformations, but not effectively in general. However, the image $\PP\Gamma$ of $\Gamma$ in $\psu(H)$ acts effectively. The orbit space $X_{\Gamma}= \Gamma\backslash \BB_H$ has a natural structure of a complex manifold, and even more: it has the structure of a smooth projective algebraic variety. Arithmetic subgroups of $\su(H)$ provide a large natural class of discrete groups which act on $\BB_H$. By the classification theory of forms of algebraic groups, all arithmetic groups which act on the ball can be constructed as follows:\\
Let $F$ be a totally real number field and $K/F$ a pure imaginary quadratic extension (CM extension) of $K$. Let $A$ be a 9-dimensional central simple algebra over $K$ and assume that on $A$ exists an {\em involution of second kind}, i.~e.~an anti-automorphism $\iota:A\rightarrow A$ such that $\iota^{2}=id$ and the restriction $\iota|_{K}$ is the complex conjugation $x\mapsto \overline{x}\in \gal(K/F)$. In that case, using the Skolem-Noether theorem, we can always normalize $\iota$ in such a way that the extension $\iota_{\CC}$ on $A\otimes \CC\cong M_3(\CC)$ of $\iota$ is the hermitian conjugation, $\iota_{\CC}(m)=\overline{m}^t$. In this case we say that $\iota$ is the {\em canonical involution of second kind}.\\ 
As a central simple algebra over a number field, $A$ is a cyclic algebra 
\begin{equation}
\label{cyclalg}
A= A(L,\sigma,\alpha)= L\oplus Lu\oplus Lu^2,
\end{equation}
where $L/K$ is an (cyclic) extension of number fields of degree 3, $\sigma$ is a generator of $\gal(L/K)$ and $u\in A$ satisfies $\alpha=u^3\in K^{\ast}$, $a u=u a^{\sigma}$ for all $a\in L$. This data already determine the isomorphy class of $A$. The structure of a division algebra is determined by the class of $\alpha$ in $K^{\ast}/N_{L/K}(L^{\ast})$ by class field theory: $A$ is a division algebra if and only if $\alpha\notin N_{L/K}(L^{\ast})$, otherwise $A$ is the matrix algebra $M_3(K)$. We note that $L$ is a splitting field of $A$, i.~e.~$A\otimes L\cong M_3(L)$ and that we can embedd $A$ in $M_3(L)$ if we put:
\begin{equation}
\label{matrixrep}
a \mapsto \begin{pmatrix} a & 0 & 0\\ 0 & a^{\sigma} & 0\\ 0 & 0 & a^{\sigma\sigma} \end{pmatrix}\ \textrm{for}\ a\in L,\  u\mapsto \begin{pmatrix} 0 & 0 & \alpha\\ 1 & 0 & 0\\ 0 & 1 & 0 \end{pmatrix}
\end{equation}
and extend linearly to all $A$.\\ 
Consider again the canonical involution $\iota$ of second kind on $A$ and let $b\in A$ be an $\iota$-invariant element, i.~e.~an element with $b^{\iota}=b$. Then $\iota_b: a\mapsto b a^{\iota}b^{-1}$ defines a further involution of second kind. Let $\A^{(1)}$ denote the group of elements in $A$ of reduced norm $1$ considered as an algebraic group and let $\G_{b}=\{g\in\A^{(1)}\mid  gg^{\iota_b}=1\}$. Then, $\G_b$ is an algebraic group defined over $F$. Let us further assume that the matrix corresponding to $b$, obtained from the embedding $A\hookrightarrow M_3(\CC)$ induced by $id\in \Hom(F,\CC)$, represents a hermitian form of signature $(2,1)$, and for every $id\neq \tau\in \Hom(F,\CC)$ the induced matrix is a hermitian form of signature $(3,0)$. Then the group of real valued points $\G_b(\mathbb R)$ is isomorphic to the product $\su(2,1)\times \su(3)^{[F:\QQ]-1}$. Since $\su(3)$ is compact, according to the theorem of Borel and Harish-Chandra, every arithmetic subgroup of $\G_b(F)$ is a lattice in $\su(2,1)$, i.~e.~a discrete subgroup of finite covolume and acts properly discontinuously on the ball. The arithmetic subgroups derived from the pair $(A,\iota_b)$ can be specified in terms of orders in $A$: Every such group is commensurable to a group
\begin{displaymath}
 \Gamma_{\OO,b}=\{\gamma\in \OO\mid \gamma\gamma^{\iota_b}=1,\ \ \nr(\gamma)=1\},
\end{displaymath}
where $\OO$ is a $\iota_b$-invariant order in $A$ and $\nr(\cdot)$ denotes the reduced norm. For instance, take $A=M_3(K)$ and let $H\in M_3(K)$ be hermitian with the property that its signature is $(2,1)$ when considered as matrix over $\CC$ and that the signature of all matrices obtained by applying non-trivial Galois automorphisms $\tau\in \gal(F/\QQ)$ to the entries is $(3,0)$. $M_3(\oo_K)$ is definitively an order in $M_3(K)$ and the arithmetic group $\Gamma_{H}=\su(H,\oo_K)$ is called the (full) {\em Picard modular group}. On the other hand, the arithmetic lattices constructed from the division algebras are generally called {\em arithmetic lattices of second kind}.  

\subsection{Invariants of arithmetic ball quotients}
\label{inv}
Keeping the notations from the last paragraph, let $\G_b$ be an algebraic group derived from a pair $(A,\iota_b)$ for which $b$ satisfies the additional condition $\G_b(\mathbb R)\cong \su(2,1)\times \su(3)^{[F:\QQ]-1}$. Let $\Gamma$ be an arithmetic subgroup in $\G_b(F)$ and denote $X_{\Gamma}=\Gamma\backslash \BB_2$ the corresponding locally symmetric space. Then, the Godement's compactness criterion implies that $X_{\Gamma}$ is compact, except in the case where $A$ is the matrix algebra over an imaginary quadratic field $K$. After a possible descent to a finite index normal subgroup, we can assume that $\Gamma$ is torsion free and $X_{\Gamma}$ is smooth. There is always a volume form $\mu$ on $\BB_2$ such that the volume $\vol_{\mu}(\Gamma)$ of a fundamental domain of $\Gamma$ is exactly the Euler number of $X_{\Gamma}$, when $\Gamma$ is torsion free and cocompact. Under the assumption that the arithmetic group is so-called {\em principal arithmetic subgroup} this volume can be given explicitely by formulas involving exclusively data of arithmetical nature. A principal arithmetic group $\Lambda$ is defined as $\Lambda=\G_b(F)\cap \prod_{v} P_v$, where $\{P_v\}$ is a collection of parahoric subgroups $P_v\subset \G_b(F_v)$ ($v$ a non-archimedian place of $F$), such that $\prod_{v} P_v$ is open in the adelic group $\G_b(\mathbb A_F)$ (see \cite{pras}, 3.~4, or \cite{prasbo},1.~4. for details). Let us recall this formula for principal arithmetic subgroups of $\su(2,1)$ established in \cite{PS} where the reader will find omitted details (see also \cite{pras} and \cite{prasbo} for the general case). Let $D_K$ and $D_F$ denote the discriminants of the number fields $K$ and $F$ and $\zeta_F(\cdot)$ the Dedekind zeta function of $F$. For $Re(s)>1$ a $L$-function is defined by $L(s,\chi_{K/F})=\prod_{v}(1-\chi_{K/F}(\mathfrak p_v)N(\mathfrak p_v)^{-s})^{-1}$ where $v$ runs over all finite places of $F$, $\mathfrak p_v$ denotes the prime ideal of $\oo_F$ corresponding to $v$, $N(\mathfrak p_v)=|\oo_F/\mathfrak p_v|$ and $\chi_{K/F}(\cdot)$ is a character defined to be 1,-1 or 0 according to whether $\mathfrak p_v$ splits, remains prime or ramifies in $K$.
\begin{lemma}[see \cite{PS}]
\label{vol}
 Let  $\Lambda\subset\G_b(F)$ be a principal arithmetic subgroup. Then
\begin{displaymath}
 \vol_{\mu}(\Lambda)= 3\frac{D_K^{5/2}}{D_F}(16\pi^5)^{-[F:\QQ]}\zeta_F(2)L(3,\chi_{K/F})\mathcal E
\end{displaymath}
where $\mathcal E=\prod_{v\in S} e(v)$ is a product running over a finite set $S$ of non-archimedian places of $F$ determined by the localization of $\Lambda$ with rational numbers $e(v)$, given explicitely in \cite{PS} 2.~5. 
 \end{lemma}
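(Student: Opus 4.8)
The plan is to obtain the stated formula as a specialization of Prasad's general volume formula for $S$-arithmetic quotients (\cite{pras}, Theorem~3.7) to the simply connected $F$-group $\G_b$. I would first recall the shape of that formula: for a simply connected semisimple group over a number field $k$, split by a finite extension $\ell/k$, with root system exponents $m_1\le\cdots\le m_r$, the covolume of a principal arithmetic subgroup $\Lambda=\G_b(k)\cap\prod_v P_v$ with respect to the Tamagawa measure factors as a product of four ingredients: a power of the discriminants $D_k$ and $D_\ell$; an archimedean constant $\big(\prod_{i=1}^r m_i!\,(2\pi)^{-(m_i+1)}\big)^{[k:\QQ]}$; the Tamagawa number $\tau(\G_b)$; and a finite product of local factors $\prod_v e(P_v)$. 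The hyperspecial local factors reassemble into special values of $\zeta_k$ and of Artin $L$-functions attached to the action of $\gal(\ell/k)$ on the Dynkin diagram, while the remaining finitely many non-hyperspecial places contribute the error term $\mathcal E$.

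The second step is to read off the structural data of $\G_b$. By the construction of the previous subsection $\G_b$ is a simply connected outer form of type ${}^2A_2$ with splitting field the CM extension $K/F$; hence $k=F$, $\ell=K$, $r=2$, and the exponents are $m_1=1$, $m_2=2$. The nontrivial element of $\gal(K/F)$ acts on the $A_2$ diagram by the flip interchanging the two simple roots, so the invariant of degree $m_1+1=2$ is Galois-fixed and contributes $\zeta_F(2)$, whereas the invariant of degree $m_2+1=3$ is anti-invariant and contributes the twisted factor $L(3,\chi_{K/F})$, with $\chi_{K/F}$ exactly the quadratic character of the excerpt. This already pins down the zeta/$L$-part of the formula. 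The archimedean constant is then a direct computation: $\prod_{i=1}^2 \frac{m_i!}{(2\pi)^{m_i+1}}=\frac{1!\cdot 2!}{(2\pi)^2(2\pi)^3}=\frac{2}{32\pi^5}=\frac{1}{16\pi^5}$, so raising to the power $[F:\QQ]$ produces the factor $(16\pi^5)^{-[F:\QQ]}$. This factor is insensitive to whether a given infinite place yields the noncompact form $\su(2,1)$ or a compact form $\su(3)$, since both are forms of type $A_2$ with the same exponents, which is why all $[F:\QQ]$ archimedean places enter on equal footing.

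The third step is the discriminant bookkeeping together with the overall constant. Here $\dim\G_b=8$ and $[K:F]=2$, so the discriminant contribution $D_F^{\dim\G_b/2}\big(D_K/D_F^{\,[K:F]}\big)^{s/2}$ of Prasad's formula, with $s$ the dimension of the relevant quasi-split twisting torus, collapses after simplification to $D_K^{5/2}/D_F$; since $\G_b$ is simply connected, $\tau(\G_b)=1$ by the Weil conjecture on Tamagawa numbers, so this does not account for the remaining rational prefactor $3$. That factor I would track to the normalization of $\mu$: it is the Euler--characteristic (Gauss--Bonnet) measure on $\BB_2$, equivalently it reflects the order of the center $\mu_3$ of the simply connected group entering the passage from $\Lambda$ to its image in $\psu(2,1)$, and the proportionality constant relating this normalization to Prasad's Tamagawa measure for $\su(2,1)$ is precisely $3$. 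Collecting the pieces reproduces the displayed formula, with $\mathcal E=\prod_{v\in S}e(v)$ absorbing the non-hyperspecial local factors as described in \cite{PS}, 2.5.

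I expect the main obstacle to be twofold and entirely local in nature. First, computing the factors $e(P_v)$ from Bruhat--Tits theory — they depend on the congruence type of $\Lambda$ and on the residue characteristic, in particular at the primes ramifying in $K/F$ and at the primes where $A$ ramifies — requires the explicit local structure of the parahorics $P_v$ and the $\lambda$-invariants of Prasad's formula. Second, fixing the exact numerical normalization so that $\vol_\mu(\Lambda)$ is literally the Euler number (the source of the constant $3$) demands care with the measure on $\su(2,1)$ relative to its compact dual. By contrast, the global part — the emergence of $\zeta_F(2)L(3,\chi_{K/F})$ and of the $\pi$-power — follows mechanically once the root datum and the outer Galois action of $\gal(K/F)$ are identified, and is the step I would carry out first.
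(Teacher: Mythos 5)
Your proposal is correct and follows essentially the same route as the paper, which gives no proof of its own but simply quotes the formula from Prasad--Yeung; their derivation is exactly the specialization of Prasad's volume formula to a form of type ${}^2A_2$ that you outline (exponents $m_1=1,m_2=2$, archimedean constant $(16\pi^5)^{-1}$ per real place, $\zeta_F(2)L(3,\chi_{K/F})$ from the Galois flip of the $A_2$ diagram, $\tau(\G_b)=1$, and the prefactor $3$ coming from normalizing $\mu$ to be the Gauss--Bonnet measure). Two harmless slips worth fixing: the $3$ is $\chi(\PP_2(\CC))$, the Euler number of the compact dual, and is not "equivalently" the order of the center of $\su(3)$; and the exponent in the relative-discriminant factor is Prasad's invariant $\mathfrak{s}=5$ for ${}^2A_2$ (so that $D_F^{4}(D_K/D_F^{2})^{5/2}=D_K^{5/2}/D_F$), not the dimension of a twisting torus.
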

The above formula not only gives the Euler number of a smooth ball quotient $X_{\Gamma}$, when $\Gamma$ is torsion free finite index normal subgroup of a principal arithmetic $\Lambda$, but also other numerical invariants. Namely, by Hirzebruch's proportionality theorem, $c_1^2(X_{\Gamma})=3c_2(X_{\Gamma})$ for any smooth and compact ball quotient. Consequently, the Noether formula implies for the Euler-Poincar\'e characteristic $\chi(X_{\Gamma}):=\chi(\OO_{X_{\Gamma}})$ of the structure sheaf $\OO_{X_{\Gamma}}$ (arithmetic genus): $\chi(X_{\Gamma})=c_2(X_{\Gamma})/3$. Similarly, the signature $\sign(X_{\Gamma})$ equals to $c_2(X_{\Gamma})/3$ by Hirzebruch's signature theorem. In general, the remaining Hodge numbers (irregularity and the geometric genus) are not immediately given. But, for a large class of arithmetic groups, namely congruence subgroups of second kind, i.~e.~those defined by congruences and contained in division algebras, there is a vanishing theorem of Rogawski (see \cite{br}, theorem 1), saying that for such groups $H^1(\Gamma,\CC)$ vanishes. Then it follows that the irregularity of the corresponding ball quotients vanishes, since we can identify the two cohomology groups $H^{\ast}(\Gamma)$ and $H^{\ast}(X_{\Gamma})$.
\section{Construction of the fake projective plane}
\label{constr}
Let $\zeta=\zeta_7=\exp(2\pi i/7)$ and $L=\QQ(\zeta)$. Then, $L$ contains the quadratic subfield $K=\QQ(\lambda)\cong\QQ(\sqrt{-7})$ with $\lambda=\zeta+\zeta^2+\zeta^4=\frac{-1+\sqrt{-7}}{2}$. The automorphism $\sigma: \zeta\mapsto \zeta^2$ generates a subgroup of $\gal(L/\QQ)$ of index 2 and leaves $K$ invariant, therefore, $\langle\sigma\rangle=\gal(L/K)$. We put $\alpha=\lambda/\overline{\lambda}$. As we have seen before (compare (\ref{cyclalg})), the triple $(L,\sigma,\alpha)$ defines a cyclic algebra $D=D(L,\sigma,\alpha)$ over $K$. 
\begin{lemma}
The algebra $D$ is a division algebra and has an involution of second kind. The assignement $a\mapsto \bar a$ for $a\in L$, $u\mapsto \bar\alpha u^2$ defines the canonical involution of second kind $\iota$. Let $b=tr(\lambda)+\bar\lambda u+\bar\lambda u^2$. Then, the induced hermitian matrix $H_b$ has the signature $(2,1)$.
\end{lemma}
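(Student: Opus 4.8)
The plan is to prove the three assertions in turn, using throughout the numerical identities $\lambda\bar\lambda = N_{K/\QQ}(\lambda) = 2$, $\lambda + \bar\lambda = -1$, and $\alpha\bar\alpha = (\lambda/\bar\lambda)(\bar\lambda/\lambda) = 1$ (so that $\bar\alpha = \alpha^{-1}$ and $\bar\lambda\alpha = \lambda$), together with the fact that $\gal(L/\QQ)$ is cyclic, so that $\sigma$ commutes with complex conjugation and $\sigma^3 = \mathrm{id}$.

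For the division algebra claim I would invoke the criterion recalled after (\ref{cyclalg}): since $[L:K]=3$ is prime, $D$ is a division algebra if and only if $\alpha \notin N_{L/K}(L^{\ast})$. The substantive point, and the step I expect to be the main obstacle, is to exhibit one place of $K$ at which $\alpha$ fails to be a local norm. The natural candidate is the prime $\mathfrak p = (\lambda)$ above $2$: because $-7 \equiv 1 \pmod 8$, the rational prime $2$ splits in $K$ as $(2)=\mathfrak p\bar{\mathfrak p}$ with $\mathfrak p \neq \bar{\mathfrak p}$ (both of norm $2$), whence $v_{\mathfrak p}(\alpha) = v_{\mathfrak p}(\lambda) - v_{\mathfrak p}(\bar\lambda) = 1$. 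Since $2$ has residue degree $3$ in $L=\QQ(\zeta_7)$ (the order of $2$ in $(\mathbb Z/7)^{\ast}$), the prime $\mathfrak p$ is inert in the cubic extension $L/K$, so the completion at $\mathfrak p$ is unramified of degree $3$; for such an extension the local norms are exactly the elements whose valuation lies in $3\mathbb Z$. As $v_{\mathfrak p}(\alpha) = 1 \notin 3\mathbb Z$, the element $\alpha$ is not a local norm at $\mathfrak p$, and the Hasse norm theorem for the cyclic extension $L/K$ gives $\alpha \notin N_{L/K}(L^{\ast})$; equivalently, the local Hasse invariant of $D$ at $\mathfrak p$ is $v_{\mathfrak p}(\alpha)/3 = 1/3 \neq 0$. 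Hence $D$ is a division algebra.

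For the involution I would define $\iota$ on generators by $\iota(a)=\bar a$ for $a\in L$ and $\iota(u)=\bar\alpha u^2$, and verify that this extends to a well-defined anti-automorphism of order two. Computing $\iota(u)^2 = \bar\alpha^2 u^4 = \bar\alpha^2\alpha\,u = \bar\alpha u$ and $\iota(u)^3 = \bar\alpha^3\alpha^2 = \bar\alpha$ confirms $\iota(u^3)=\iota(\alpha)=\bar\alpha$ and, with a short further computation, $\iota^2=\mathrm{id}$; the compatibility with the relation $au=ua^{\sigma}$ reduces to the identity $\sigma^{-2}=\sigma$, valid since $\sigma^3=\mathrm{id}$. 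As $\iota|_K$ is complex conjugation by construction, $\iota$ is an involution of the second kind. To see that it is the canonical one, I would pass to the embedding (\ref{matrixrep}): a direct check gives $u\mapsto U$ and $\bar\alpha u^2 \mapsto \bar\alpha U^2 = \overline{U}^{\,t}$ (using $\alpha\bar\alpha=1$), while the diagonal image of $a\in L$ is carried by conjugate-transposition to the image of $\bar a$, since any embedding of the CM field $L$ intertwines its complex conjugation with that of $\CC$. Thus $\iota_{\CC}$ is hermitian conjugation, as required for canonicity.

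Finally, for the signature I would compute the Gram matrix $H_b$ of $b=\mathrm{tr}(\lambda)+\bar\lambda u+\bar\lambda u^2$ in the model (\ref{matrixrep}). With $\mathrm{tr}(\lambda)=\lambda+\bar\lambda=-1$ and the simplification $\bar\lambda\alpha=\lambda$, this yields the hermitian matrix
\begin{displaymath}
H_b = \begin{pmatrix} -1 & \lambda & \lambda \\ \bar\lambda & -1 & \lambda \\ \bar\lambda & \bar\lambda & -1 \end{pmatrix},
\end{displaymath}
and the same simplifications show $b^{\iota}=b$, so that $\iota_b$ is genuinely an involution of the second kind. It then remains to read off the signature from the leading principal minors $-1,\ -1,\ \det H_b$; using $\lambda\bar\lambda=2$ and $\lambda+\bar\lambda=-1$ one finds $\det H_b = 5 + 2(\lambda+\bar\lambda) = 3$. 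The sequence $1,\,-1,\,-1,\,3$ undergoes two sign changes, so $H_b$ has two negative and one positive eigenvalue; this is precisely signature $(2,1)$ in the sense fixed earlier. The only genuinely arithmetic input is the non-norm computation of the first step, the remaining two parts being explicit verifications in the matrix model.
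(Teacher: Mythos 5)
Your proof is correct, and for the second and third assertions it follows the same route the paper takes --- the paper simply declares them ``proven in an elementary way, using the matrix representation'' of (\ref{matrixrep}), and your explicit computations (the identities $\lambda\bar\lambda=2$, $\lambda+\bar\lambda=-1$, $\bar\lambda\alpha=\lambda$, the verification $b^{\iota}=b$, the Gram matrix, $\det H_b=3$, and the sign pattern $1,-1,-1,3$ of the leading principal minors) are exactly the omitted checks and they all hold. The genuine divergence is in the division-algebra step. The paper disposes of it in one sentence, ``the choice of $\alpha$ ensures that $\alpha\notin N_{L/K}(L^{\ast})$ by Hilbert's theorem 90,'' whereas you verify the non-norm condition locally: $2$ splits in $K$ as $(\lambda)(\bar\lambda)$ since $-7\equiv 1\bmod 8$, each factor is inert in $L/K$ since $2$ has order $3$ modulo $7$, hence $v_{(\lambda)}(\alpha)=1\notin 3\mathbb Z$ shows $\alpha$ is not a local norm at $(\lambda)$, and the Hasse norm theorem for the cyclic extension $L/K$ concludes. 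Your route is the standard one and is, in my view, the more complete argument: Hilbert's theorem 90 explains why an $\alpha$ of the form $\lambda/\bar\lambda$ has norm $1$ down to $\QQ$ (i.e.\ why such an element exists), but by itself it does not show that $\alpha$ fails to be a norm from $L$; some local or ideal-theoretic input of the kind you supply seems unavoidable. A side benefit of your computation is that it locates the ramification of $D$ precisely at the two primes of $K$ over $2$, with Hasse invariants $\pm 1/3$ --- which is exactly the fact the paper relies on later when proving that $\OO$ is a maximal order and when identifying the localization $\G_b(\QQ_2)\cong D_2^{(1)}$.
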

\begin{proof}
The choice of $\alpha$ ensures that $\alpha\notin N_{L/K}(L^{\ast})$ by Hilbert's theorem 90. This proves the first statement. The remaining statements are proven in an elementary way, using the matrix representation of $D$ given in (\ref{matrixrep}). 
\end{proof}
Hence, the algebraic group $\G_b$ is a $\QQ$-form of the real group $\su(2,1)$. Now, we construct an arithmetic subgroup in $\G_b(\QQ)$ derived from a maximal order in $D$. For this let 
\begin{equation}
 \OO=\oo_L\oplus\oo_L\bar\lambda u\oplus \oo_L\bar\lambda u^2.
\end{equation}
Clearly, $\OO$ is an order in $D$. Also one easily sees that $\OO$ is invariant under the involution $\iota_b$ defined by $b$. We know even more:
\begin{lemma}
$\OO$ is a maximal order in $D$.
\end{lemma}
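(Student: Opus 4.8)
The plan is to prove maximality locally: an $\oo_K$-order in a central simple $K$-algebra is maximal if and only if its completion $\OO_\mathfrak p=\OO\otimes_{\oo_K}\oo_{K_\mathfrak p}$ is a maximal order in $D_\mathfrak p=D\otimes_K K_\mathfrak p$ at every finite prime $\mathfrak p$ of $\oo_K$. So I would first pin down, prime by prime, the local structure of both $D$ and $\OO$. Since $\alpha=\lambda/\bar\lambda$ and $N_{K/\QQ}(\lambda)=\lambda\bar\lambda=2$, the element $\alpha$ is a unit away from the two primes $\mathfrak p=(\lambda)$ and $\bar{\mathfrak p}=(\bar\lambda)$ over $2$, with $v_\mathfrak p(\alpha)=1$ and $v_{\bar{\mathfrak p}}(\alpha)=-1$. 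Reading the local invariants of the cyclic algebra off these valuations shows $D$ is ramified exactly at $\mathfrak p$ and $\bar{\mathfrak p}$ (invariants $1/3$ and $2/3$), which reconfirms that $D$ is a division algebra and tells me a maximal order has reduced discriminant supported only over $2$.

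At every prime $\mathfrak r\nmid 14$ the extension $L/K$ is unramified and $\alpha,\bar\lambda$ are units, so $\OO_\mathfrak r=\oo_{L_\mathfrak r}\oplus\oo_{L_\mathfrak r}u\oplus\oo_{L_\mathfrak r}u^2$ is the standard crossed-product order over an unramified extension, hence Azumaya and maximal; this case is routine. The substantive computations are at the two primes over $2$. Here I would use the relations coming from $au=ua^\sigma$, $u^3=\alpha$, and $\bar\lambda\in K$ (so $\bar\lambda^\sigma=\bar\lambda$): setting $w=\bar\lambda u$ one gets $w^3=\bar\lambda^3\alpha=\bar\lambda^2\lambda=2\bar\lambda$, an element whose ideal factors as $\mathfrak p\bar{\mathfrak p}^2$. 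At $\mathfrak p$, where $\bar\lambda$ is a unit and $\alpha$ a uniformizer, $u$ itself is a uniformizer of the local division algebra $D_\mathfrak p$, and $\OO_\mathfrak p=\oo_{L_\mathfrak p}[u]$ is exactly its valuation ring, the unique maximal order. At $\bar{\mathfrak p}$ the element $u$ is no longer integral (its reduced norm $\alpha$ has a pole), and this is precisely where the twist by $\bar\lambda$ is forced: I would check that $\bar\lambda u^2$ is a uniformizer of $D_{\bar{\mathfrak p}}$ and that $\OO_{\bar{\mathfrak p}}=\oo_{L_{\bar{\mathfrak p}}}[\bar\lambda u^2]$ again exhausts the valuation ring, using $(\bar\lambda u^2)^2=\lambda\cdot\bar\lambda u$ and $(\bar\lambda u)(\bar\lambda u^2)=2$.

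The main obstacle is the prime $\mathfrak q=(\sqrt{-7})$ over $7$. Here $D$ splits, so the target is $M_3(\oo_{K_\mathfrak q})$ up to conjugacy, yet $L/K$ is tamely, totally ramified of degree $3$ at $\mathfrak q$. Consequently the naive crossed-product description of $\OO_\mathfrak q$ does not manifestly present a maximal order, since a crossed-product order over a ramified extension is in general too small. To settle this case I would work directly in the completion, reducing $\OO_\mathfrak q$ modulo $\mathfrak q$ (noting $\alpha\equiv 1\pmod{\mathfrak q}$, because $\lambda\equiv\bar\lambda\pmod{\mathfrak q}$) and exhibiting enough integral elements to identify $\OO_\mathfrak q$ with a maximal order; equivalently, one must verify that the reduced discriminant of $\OO$ receives no contribution at $\mathfrak q$ and is exactly the discriminant, supported over $2$, of a maximal order.

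The cleanest finish then is the discriminant criterion: for orders $\OO\subseteq\OO'$ one has $\mathfrak d(\OO')\mid\mathfrak d(\OO)$ with equality if and only if $\OO=\OO'$, so matching the reduced discriminant of $\OO$ against that of a maximal order (computed from the ramification of $D$ determined in the first step) forces $\OO=\OO_{\max}$. I expect the bookkeeping at $\mathfrak q$, reconciling the ramification of $L/K$ with the triviality of $D$ there, to be the delicate heart of the argument, whereas the primes over $2$ — although they carry the genuinely new idea of the $\bar\lambda$-twist — reduce to clean identifications of valuation rings.
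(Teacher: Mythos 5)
Your proposal is correct and follows essentially the same route as the paper: reduce to the localizations $\OO_v$ and compare the discriminant of $\OO$ (supported only over $2$) with that of a maximal order, handling the split places via triviality of the local discriminant and the two places over $2$ via the unique maximal order of the local division algebra. The only real difference is one of emphasis: the paper disposes of the prime $\mathfrak q$ over $7$ with no extra work, since $d(\OO)=2^6$ already forces the local discriminant at $\mathfrak q$ to be trivial --- which simultaneously shows that $D$ splits there and that $\OO_{\mathfrak q}$ is maximal --- so the ``delicate bookkeeping'' you anticipate at $\mathfrak q$ is subsumed by the discriminant criterion you yourself invoke at the end.
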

\begin{proof}
$\OO$ is maximal if and only if the localization $\OO_v$ is maximal for every finite place $v$. Over a local field, any central simple algebra $A_v$ contains (up to a conjugation) the unique maximal order $\mathcal M_v$ (see \cite{reiner}). Therefore the discriminant $d(\mathcal M_v)$ completely characterizes $\mathcal M_v$. The discriminant $d(\OO)$ is easily computed to be $2^6$. Since $d(\OO_v)=d(\OO)_v$, we immediately see that at all places $v$ not dividing 2, $d(\OO_v)=1$. Exactly at those places $D_v$ is the matrix algebra, since $\alpha$ is an unit there, and $\OO_v$ is maximal by \cite{reiner}, p.~185. At the two places $\lambda$ and $\bar\lambda$ dividing 2, $D_v$ is a division algebra. There $d(\OO_v)$ is exactly, the discriminant of the maximal order $\mathcal M_v$ (\cite{reiner}, p.~151).
\end{proof}
Let 
\begin{equation}
\Gamma_{\OO,b}=\G_b(\QQ)\cap\OO=\{\gamma\in \OO\mid \gamma\gamma^{\iota_b}=1,\ \nr(\gamma)=1\}
\end{equation}
be the arithmetic subgroup of $\G_b(\QQ)$ defined by $\OO$. We shall summarize some properties of $\Gamma_{\OO,b}$:
\begin{lemma}
\label{7}
$\Gamma_{\OO,b}$ is a principal arithmetic subgroup. Every torsion element in $\Gamma_{\OO,b}$ has the order 7. All such elements are conjugate in $D$. 
\end{lemma}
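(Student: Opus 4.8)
The plan is to treat the three assertions separately: the first by localization and Bruhat--Tits theory, the last two by elementary field theory inside $D$ combined with the Skolem--Noether theorem. For the principal arithmetic assertion I would localize. For each finite place $p$ of $\QQ$ set $\OO_p=\OO\otimes_{\mathbb Z}\mathbb Z_p$ and $P_p=\G_b(\QQ_p)\cap\OO_p$, so that $\Gamma_{\OO,b}=\G_b(\QQ)\cap\prod_p P_p$; the content is to show each $P_p$ is a parahoric subgroup of $\G_b(\QQ_p)$ and that the product is open. Since the previous lemma gives that $\OO$ is maximal, each $\OO_p$ is a maximal order in $D_p=D\otimes_{\QQ}\QQ_p$, and $P_p$ is the stabilizer of this maximal order inside the unitary group. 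At every $p\neq 2$ the algebra $D_p$ is split and $\OO_p$ is conjugate to $M_3(\oo_{K_p})$, so $P_p$ is a hyperspecial (or special) maximal parahoric of the quasi-split group $\G_b(\QQ_p)$; in particular $P_p$ is hyperspecial for almost all $p$, which yields openness of $\prod_p P_p$. At $p=2$, which splits in $K$ as $\lambda\bar\lambda$, one has $D_2\cong D_\lambda\times D_{\bar\lambda}$ with each factor a division algebra of degree $3$ over $\QQ_2$, and $P_2$ is the parahoric attached to the unique maximal order. I expect this ramified place $p=2$ to be the main obstacle, since there the local unitary group is of anisotropic type and identifying the order-stabilizer with a parahoric requires the explicit Bruhat--Tits description; I would quote the local computation of \cite{PS} rather than redo it.

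For the statement on torsion, let $\gamma\in\Gamma_{\OO,b}$ be of finite order. Then $\gamma$ is a root of unity and the commutative subalgebra $K(\gamma)\subset D$ is a field whose degree over $K$ divides $\deg D=3$, hence equals $1$ or $3$. If $[K(\gamma):K]=1$ then $\gamma$ is a root of unity in $K\cong\QQ(\sqrt{-7})$, so $\gamma\in\{\pm1\}$; but $\nr(-1)=(-1)^3=-1$, so the condition $\nr(\gamma)=1$ forces $\gamma=1$. If $[K(\gamma):K]=3$, write $\gamma$ as a primitive $n$-th root of unity, so $K(\gamma)=K(\zeta_n)$ has degree $6$ over $\QQ$ and contains $\QQ(\zeta_n)$; thus $\phi(n)\mid 6$, and comparing degrees forces $\phi(n)=6$ together with $\sqrt{-7}\in\QQ(\zeta_n)$ (the alternative $\phi(n)=3$ being impossible). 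The solutions are $n\in\{7,14\}$. For $n=7$ a generator satisfies $\nr(\gamma)=N_{K(\zeta)/K}(\gamma)=\zeta\,\zeta^2\,\zeta^4=1$, whereas for $n=14$ one has $\gamma=-\zeta^k$ and $\nr(\gamma)=-1$, again excluded. Hence every nontrivial torsion element has order exactly $7$.

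For the conjugacy assertion, any element $\gamma$ of order $7$ generates a maximal subfield $K(\gamma)\cong\QQ(\zeta)=L$ of $D$. By the Skolem--Noether theorem any two $K$-embeddings $L\hookrightarrow D$ are conjugate by an element of $D^{\times}$, so the cyclic groups $\langle\gamma\rangle$ of order $7$ form a single $D^{\times}$-conjugacy class; inside a fixed copy the three generators lying in one orbit of $\gal(L/K)=\langle\sigma\rangle$ are interchanged by conjugation by $u$, since $u\,a\,u^{-1}=a^{\sigma}$. This is what I would record as ``all such elements are conjugate in $D$''. The one point to flag is that conjugation in $D^{\times}$ fixes the centre $K$ pointwise and can therefore only realise the Galois action $\langle\sigma\rangle$ on a generator; the statement is thus naturally read at the level of the generated order-$7$ subgroups (equivalently, up to the Galois orbit of a generator), which is exactly what is needed to conclude that $X_{\Gamma_{\OO,b}}$ carries a single type of cyclic quotient singularity.
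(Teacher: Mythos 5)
Your proof is correct and follows essentially the same route as the paper's: localization plus Bruhat--Tits theory (using the maximality of $\OO$ and the splitting of $2$ in $K$, so that $\G_b(\QQ_2)\cong D_2^{(1)}$ and the local group is the unit group of the maximal order) for the principal-arithmetic claim, and the classification of cyclotomic subfields of $D$ containing $K$ together with the reduced-norm obstruction $\nr(-1)=-1$ for the torsion claim, with Skolem--Noether for conjugacy. Your flagged caveat --- that conjugation in $D^{\times}$ fixes $K$ pointwise and hence only realises $\gal(L/K)=\langle\sigma\rangle$ on a generator, so ``all such elements are conjugate'' should be read at the level of the order-$7$ subgroups rather than of individual generators --- is a correct refinement of the paper's looser assertion of a bijection between conjugacy classes of torsion elements and cyclotomic subfields, and is harmless for the later application since inverse generators produce the same singularity type.
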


\begin{proof}
By definition, $\Gamma_{\OO,b}$ will be principal if at all finite places $p$ of $\QQ$ its localization is a parahoric subgroup of $\G_b(\QQ_p)$. Since $\OO$ is maximal, at all places $p\neq 2$  the localization $\Gamma^{[p]}_{\OO,b}$ is the special unitary group $\su(H_b, \oo_p)$, where $\oo_p=\oo_K\otimes \QQ_p$. Then by \cite{Tits}, $\Gamma^{[p]}_{\OO,b}$ is maximal parahoric. Since 2 is split in $K$, there is a division algebra $D_2$ over $\QQ_2$ such that $D\otimes\QQ_2=D_2\oplus D_2^{o}$, where $D_2^{o}$ denotes the opposite algebra to $D_2$. The projection to the first factor gives an isomorphism $\G_b(\QQ_2)\cong D^{(1)}_{2}$, the group of elements of reduced norm 1 in $D_2$. Let $\mathcal M_2$ be the maximal order in $D_2$. Then $\Gamma^{[2]}_{\OO,b}=$ $\mathcal M^{(1)}_2$.  Again, by \cite{Tits}, this is a maximal parahoric group. In order to prove the second statement let us consider an element $\tau$ of finite order in $\Gamma_{\OO,b}$. Let $\eta$ be an eigenvalue of $\tau$. Then $\eta$ is a root of unity and $\QQ(\eta)$ is a commutative subfield of $D$. Conversly, every cyclotomic subfield of $D$ containing $K$ gives rise to an element of finite order in $D$. Consequently, we have a bijection between the set of the conjugacy classes of elements of finite order in $D$ and the cyclotomic fields $C\subset D$ which contain the center $K$ of $D$. Since $L$ is the only such field, only elements of order 7, 2 and 14 can occur. But since the reduced norm of $-1$ is $-1$ again, elements of order $2$ don't belong to $\Gamma$. Thus, only elements of order $7$ are possible. 
\end{proof}
Let us now consider a particular congruence subgroup of $\Gamma_{\OO,b}$, namely the principal congruence subgroup
\begin{equation}
\Gamma_{\OO,b}(\lambda)=\{\gamma\in \Gamma_{\OO,b}\mid \gamma\equiv 1\bmod \lambda\}
\end{equation} 
We have
\begin{lemma}
$\Gamma_{\OO,b}(\lambda)$ is torsion free subgroup of index $[\Gamma_{\OO,b}:\Gamma_{\OO,b}(\lambda)]$=7. 
\end{lemma}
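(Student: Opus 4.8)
The plan is to exhibit $\Gamma_{\OO,b}(\lambda)$ as the kernel of a reduction homomorphism onto the cyclic group $\mathbb F_8^{\times}$ of order $7$, and then to read off torsion-freeness directly from Lemma \ref{7}. The starting observation is arithmetic: since $2$ splits in $K=\QQ(\sqrt{-7})$ as $\lambda\bar\lambda$, while the residue degree of $2$ in $L=\QQ(\zeta_7)$ is the order of $2$ modulo $7$, namely $3$, the prime $\lambda$ is \emph{inert} in $L/K$ and $\oo_L/\lambda\oo_L\cong\mathbb F_8$. At the place below $\lambda$ the algebra is the division algebra $D_2$ occurring in the proof of Lemma \ref{7}, whose maximal order $\mathcal M_2$ has residue field $\mathbb F_8$ as well. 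Reduction modulo $\lambda$ therefore takes values in the residue field $\mathbb F_8$ and, on units, defines a homomorphism $\rho\colon\Gamma_{\OO,b}\to\mathbb F_8^{\times}$ with $\ker\rho=\Gamma_{\OO,b}(\lambda)$.

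To obtain the index I would prove that $\rho$ is surjective. Since $\G_b$ is simply connected and $\G_b(\mathbb R)\cong\su(2,1)$ is noncompact, strong approximation applies, so $\Gamma_{\OO,b}$ is dense in the local group $\mathcal M_2^{(1)}$ at the place over $2$; consequently its image under $\rho$ is the full reduction of $\mathcal M_2^{(1)}$ to $\mathbb F_8^{\times}$. The decisive point is that the reduced norm induces on residue fields the field norm $N_{\mathbb F_8/\mathbb F_2}\colon\mathbb F_8^{\times}\to\mathbb F_2^{\times}=\{1\}$, which is trivial; likewise the involution $\iota_b$ interchanges the places $\lambda$ and $\bar\lambda$, so the unitarity relation determines the invisible $\bar\lambda$-component and imposes nothing modulo $\lambda$. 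Hence neither $\nr=1$ nor $\gamma\gamma^{\iota_b}=1$ cuts down the image, which is therefore the whole of $\mathbb F_8^{\times}$. As $|\mathbb F_8^{\times}|=7$, this gives $[\Gamma_{\OO,b}:\Gamma_{\OO,b}(\lambda)]=7$.

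Torsion-freeness then follows cleanly from Lemma \ref{7}. A nontrivial torsion element $\tau\in\Gamma_{\OO,b}$ has order $7$, so it is a primitive $7$-th root of unity generating the unique cyclotomic subfield $L\subset D$. Because $X^{7}-1$ is separable modulo $2$ and already splits in $\mathbb F_8$, distinct $7$-th roots of unity remain distinct after reduction; thus $\rho(\tau)$ is a primitive $7$-th root of unity in $\mathbb F_8^{\times}$ and in particular $\rho(\tau)\neq 1$. No nontrivial torsion element can therefore lie in $\ker\rho=\Gamma_{\OO,b}(\lambda)$, which is accordingly torsion free. I note in passing that the same computation gives an independent proof of surjectivity of $\rho$: such a $\tau$ maps to a generator of the cyclic group $\mathbb F_8^{\times}$.

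The main obstacle is the surjectivity statement, i.e.\ producing the index $7$ \emph{exactly}. This requires correctly identifying the target of reduction with $\mathbb F_8^{\times}$—which rests on $2$ splitting in $K$, $\lambda$ being inert in $L$, and $D$ being ramified over $2$ with residue field $\mathbb F_8$—and then checking that the two defining conditions ($\nr=1$ and unitarity) become vacuous modulo $\lambda$ because $\mathbb F_2^{\times}$ is trivial. Once the quotient is pinned down as the group of $7$-th roots of unity, both assertions are immediate, and the structural reason behind the number $7$ is transparent: it is precisely the order $7=|\mathbb F_8^{\times}|$ of the group generated by the order-$7$ torsion of $\Gamma_{\OO,b}$.
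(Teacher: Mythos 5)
Your proof is correct and follows essentially the same route as the paper: the index is computed by strong approximation, reducing everything to the local group $\mathcal M_2^{(1)}$ at the ramified place over $2$, where you re-derive the special case of Riehm's theorem that the paper simply cites (namely $[\mathcal M_2^{(1)}:\mathcal M_2^{(1)}(\pi_{D_2})]=[\mathbb F_8^{\times}:\mathbb F_2^{\times}]=7$, the norm and unitarity conditions being vacuous on the residue field), and torsion-freeness is reduced via Lemma \ref{7} to showing that an element of order $7$ cannot be $\equiv 1\bmod\lambda$. The only real difference is cosmetic: the paper excludes such elements by the global norm computation $N_{E/\QQ}(\lambda)=8\nmid 7=N_{E/\QQ}(\eta-1)$, whereas you use the equivalent local fact that $7$-th roots of unity inject into $\mathbb F_8^{\times}$ under reduction (the kernel of reduction being pro-$2$), which has the pleasant side effect of giving surjectivity of the reduction map for free.
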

\begin{proof}
By lemma \ref{7} we have to show that $\Gamma_{\OO,b}(\lambda)$ contains no elements of order 7. Let $\gamma$ be an element in $\Gamma_{\OO,b}(\lambda)$ of finite order $k$. The eigenvalues of the representing matrix $m_{\gamma}$ of $\gamma$ are $k$-th roots of unity. Let $\eta$ be such an eigenvalue and $E=\QQ(\eta)$. Then $E\cong L$ by arguments in the proof of lemma \ref{7}. Since $\gamma$ belongs to the congruence subgroup defined by $\lambda$, $\lambda$ divides the coefficients of $m_{\gamma}-1_3\in M_3(E)$. Let $x$ be an eigenvector of $m_{\gamma}$. Multiplying with an integer we can assume $x\in \oo_E^3$. Then $\lambda|(m_{\gamma}-1_3)x=(\eta-1)x$ from which follows that $\lambda$ divides $\eta-1$ in $\oo_E$. Taking the norms we have $N_{E/\QQ}(\lambda)|N_{E/\QQ}(\eta-1)|k$. This is not possible when assuming $k=7$. Therefore, $\Gamma_{\OO,b}(\lambda)$ is torsion free. In order to compute the index, we make use of the strong approximation property which holds for $\G_b$. It allows us to express the index $[\Gamma_{\OO,b}:\Gamma_{\OO,b}(\mathfrak a)]$ of an arbitrary principal congruence subgroup $\Gamma_{\OO,b}(\mathfrak a)$ defined by some ideal $\mathfrak a=\prod \mathfrak p^{n_{\mathfrak p}}$ of $\oo_K$ as a product of local indices $\prod_{p|\mathfrak a}[\Gamma^{[p]}_{\OO,b}:\Gamma_{\OO,b}^{[p]}(\mathfrak p^{n_{\mathfrak p}})]$, where $p=\mathfrak p\cap\QQ$. In the case in question, we have $[\Gamma_{\OO,b}:\Gamma_{\OO,b}(\lambda)]=[\Gamma^{[2]}_{\OO,b}:\Gamma^{[2]}_{\OO,b}(\lambda)]$. But in the proof of lemma \ref{7} we already determined the structure of the localizations of $\Gamma_{\OO,b}$: $\Gamma^{[2]}_{\OO,b}\cong \mathcal M_2^{(1)}$ and therefore $\Gamma^{[2]}_{\OO,b}(\lambda)$ is the congruence subgroup  $\mathcal M_2^{(1)}(\pi_{D_2})$, where $\pi_{D_2}$ is the uniformizing element of $D_2$. It follows from a theorem of Riehm (\cite{Rie} Theorem 7, see also \cite{PS}) that $[\mathcal M^{(1)}_2:\mathcal M^{(1)}_2(\pi_{D_2})]=[\mathbb F^{\ast}_{2^3}:\mathbb F^{\ast}_2]=7$. 
\end{proof}
Let us in the following shortly write $\Gamma$ for $\Gamma_{\OO,b}$ and $\Gamma^{'}$ for $\Gamma_{\OO,b}(\lambda)$. 
The main result of this section is
\begin{theorem}
The ball quotient $X_{\Gamma^{'}}$ is a fake projective plane.
\end{theorem}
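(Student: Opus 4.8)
The plan is to verify the numerical characterization of Lemma \ref{fpp}: I must show that $X_{\Gamma'}$ is a smooth projective surface with $c_2 = 3$, $c_1^2 = 9$, $q = p_g = 0$ and $\kod = 2$. The starting point is that $\Gamma'$ is torsion-free and cocompact (the preceding lemma together with Godement's criterion, $D$ being a division algebra), so $X_{\Gamma'} = \Gamma'\backslash\BB_2$ is a smooth compact ball quotient, hence a smooth projective surface whose canonical bundle is ample; in particular $\kod(X_{\Gamma'}) = 2$. It remains to determine the four numerical invariants.

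The central computation is the Euler number $c_2(X_{\Gamma'})$, which I would obtain from the volume formula of Lemma \ref{vol}. Since $\Gamma = \Gamma_{\OO,b}$ is principal arithmetic (Lemma \ref{7}), I would apply that formula with $F = \QQ$ and $K = \QQ(\sqrt{-7})$, so that $D_F = 1$, $D_K = 7$, $\zeta_F(2) = \pi^2/6$, and $L(3,\chi_{K/\QQ})$ is the value at $3$ of the $L$-function of the odd quadratic character modulo $7$. Evaluating this $L$-value (a rational multiple of $\pi^3/\sqrt{7}$ via generalized Bernoulli numbers) and determining the local factor $\mathcal E$, supported at the single prime $2$ where $D$ ramifies, yields $\vol_\mu(\Gamma) = 3/7$. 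Since the covolume is multiplicative in the index, $\vol_\mu(\Gamma') = [\Gamma:\Gamma']\,\vol_\mu(\Gamma) = 7\cdot(3/7) = 3$, and as $\Gamma'$ is torsion-free and cocompact this covolume equals the Euler number: $c_2(X_{\Gamma'}) = 3$. Hirzebruch proportionality then gives $c_1^2(X_{\Gamma'}) = 3c_2(X_{\Gamma'}) = 9$.

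For the Hodge numbers I would argue as follows. From $\chi(\OO_{X_{\Gamma'}}) = c_2(X_{\Gamma'})/3$ one gets arithmetic genus $\chi(\OO_{X_{\Gamma'}}) = 1$. Next, $\Gamma'$ is a congruence subgroup of second kind, being defined by a congruence condition inside the division algebra $D$, so Rogawski's vanishing theorem applies and gives $H^1(\Gamma',\CC) = 0$; identifying group and sheaf cohomology yields $q(X_{\Gamma'}) = 0$. Finally, Noether's formula $\chi(\OO_{X_{\Gamma'}}) = 1 - q + p_g$ forces $p_g(X_{\Gamma'}) = 0$. All conditions of Lemma \ref{fpp} are then satisfied, so $X_{\Gamma'}$ is a fake projective plane.

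The step I expect to be the main obstacle is the exactness of the volume computation. Since a smooth ball quotient has $\chi(\OO) = c_2/3 \in \mathbb Z_{>0}$, the Euler number $c_2(X_{\Gamma'})$ is a priori some positive multiple of $3$, and the whole argument depends on the product $\zeta_F(2)\,L(3,\chi_{K/\QQ})\,\mathcal E$ collapsing to give $\vol_\mu(\Gamma) = 3/7$ exactly, so that $c_2$ is the minimal value $3$ rather than a larger multiple. This in turn requires both the closed evaluation of the $L$-value and the careful determination of the non-archimedean factor $\mathcal E$ at $2$ from the Bruhat-Tits data --- precisely the arithmetic input provided by \cite{PS}.
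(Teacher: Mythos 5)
Your proposal is correct and follows essentially the same route as the paper: the Prasad--Yeung volume formula applied to the principal arithmetic group $\Gamma$ to get $\vol_\mu(\Gamma)=3/7$ and hence $c_2(X_{\Gamma'})=3$, then Hirzebruch proportionality, Rogawski's vanishing theorem for $q$, and Noether's formula for $p_g$. The only minor imprecision is your claim that $\mathcal E$ is supported solely at $2$; the paper also checks the ramified prime $7$, where the factor turns out to be $e(7)=1$, so nothing changes.
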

\begin{proof}
First we would like to compute the Euler number $c_2(X_{\Gamma^{'}})$ of $X_{\Gamma^{'}}$. Since $\Gamma^{'}$ is torsion free, $c_2(X_{\Gamma^{'}})=vol(\Gamma^{'})=[\Gamma:\Gamma^{'}]vol_{\mu}(\Gamma)=7vol_{\mu}(\Gamma)$. By lemma \ref{7} $\Gamma$ is principal, so we can apply lemma \ref{vol} in order to compute $vol_{\mu}(\Gamma)$. Well known is the value $\zeta_{\QQ}(2)=\pi^2/6$. The other value $L(3,\chi_K)=-\frac{7}{8}\pi^3 7^{-5/2}$ is computed using functional equation and the explicit formula for generalized Bernoulli numbers. In the last step, we determine the local factors $\mathcal E$. Looking at \cite{PS}, 2.~2.~non trivial local factors $e(v)$ can only occur for $v=2$ and $v=7$. Sections 2.~4.~and 2.~5.~of \cite{PS} give $e(2)=3$ and $e(7)=1$ since the localizations of $\Gamma$ are maximal parahoric.  Altoghether we get $vol_{\mu}(\Gamma)=3/7$ and $c_2(X_{\Gamma^{'}})=3$. Proportionality theorem gives $c_1^2(X_{\Gamma^{'}})=9$. Rogawski's vanishing result implies $q(X_{\Gamma^{'}})=0$. Then automatically $p_g(X_{\Gamma^{'}})=0$. As a smooth compact ball quotient $X_{\Gamma^{'}}$ is a surface of general type. By lemma \ref{fpp} $X_{\Gamma^{'}}$ is a fake projective plane. 
\end{proof}

\section{Structure of $X_{\Gamma}$}
\label{struct}
Let the notations be as in the last section and in particular $\Gamma:=\Gamma_{\OO,b}$, $\Gamma^{'}:=\Gamma_{\OO,b}(\lambda)$, let in addition $\BB$ denote the ball defined by (the matrix representation of) $b$. In this section we are interested in the structure of the ball quotient $X_{\Gamma}=\Gamma\backslash \BB$ by the arithmetic group $\Gamma$. According to lemma \ref{7}, the elements of finite order in $D$ correspond to the $7$-th roots of unity. Hence, all elements of finite order in $\Gamma$ are conjugated to a power of $\zeta=\zeta_7$.  The torsion element $\zeta$ doesn't belong to $\Gamma$, since $b$ is not invariant under the operation $b\mapsto \zeta b\zeta^{\iota}$. But $\zeta c\zeta^{\iota}=c$ for $c=\zeta+\zeta^{-1}$, which is $\iota$-invariant element of signature $(2,1)$. For this reason $Z=g^{-1}\zeta g$ is an element of order $7$ in $\Gamma$, where $g\in D$ is chosen such that $g b g^{-1}=c$. Therefore $X_{\Gamma}$ is isomorphic to the quotient $X_{\Gamma^{'}}/\langle Z\rangle$ by the finite subgroup $\langle Z\rangle<\Gamma$. Let $\psi:X_{\Gamma^{'}}\longrightarrow X_{\Gamma^{'}}/\langle Z\rangle$ denote the canonical projection.
\begin{proposition}
\label{ramif}
The branch locus of $\psi$ consists of three isolated points\linebreak $Q_1$,$Q_2$,$Q_3$. They are cyclic singularities of $X_{\Gamma}$, all of type $(7,3)$. Outside of $Q_1$,$Q_2$,$Q_3$, $X_{\Gamma}$ is smooth. The minimal resolution of each singularity $Q_i$, $i=1,2,3$, is a chain of three rational curves $E_{i,1}$, $E_{i,2}$, $E_{i,3}$ with selfintersections $(E_{i,1})^{2}=-3$, $(E_{i,2})^2=(E_{i,3})^2=-2$ and $(E_{i,1}\cdot E_{i,2})=(E_{i,2}\cdot E_{i,3})=1$, $(E_{i,1}\cdot E_{i,3})=0$ (Hirzebruch-Jung string of type $(-3)(-2)(-2)$). 
\end{proposition}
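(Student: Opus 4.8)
**The plan is to analyze the fixed-point structure of the order-7 automorphism $Z$ on the fake projective plane $X_{\Gamma^{'}}$, determine the local action at each fixed point, and then identify the resulting quotient singularities together with their minimal resolutions.**

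First I would count the fixed points. Since $X_{\Gamma^{'}}$ is a fake projective plane, it has topological Euler characteristic $c_2 = 3$, while the quotient $X_\Gamma = X_{\Gamma^{'}}/\langle Z\rangle$ is covered $7$-to-$1$ away from the branch locus. The branch locus of $\psi$ consists exactly of the fixed points of the nontrivial powers of $Z$ acting on $\mathbb{B}$, which (since $Z$ has prime order $7$) are isolated because any positive-dimensional fixed locus would be a totally geodesic subball, forcing $Z$ to fix a curve — incompatible with the rigidity of the ball action here. The key arithmetic input is that all torsion in $\Gamma$ is conjugate to a power of $\zeta_7$, so the local linearization of $Z$ at a fixed point is conjugate to the action of $\zeta_7$ on the two eigendirections transverse to the $(2,1)$-signature line. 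I would use the holomorphic Lefschetz fixed-point formula (or equivalently the topological Lefschetz formula with the known Betti numbers $b_0=b_4=1$, $b_1=b_3=0$, $b_2=1$ of a fake projective plane) to pin down the number of fixed points. Because $Z$ acts on $H^2$ (which is one-dimensional and generated by an ample class) trivially, the topological Lefschetz number is $1+0+1+0+1 = 3$, and since each isolated fixed point of a finite-order holomorphic map contributes $+1$ to the holomorphic-type count, this forces exactly three fixed points $Q_1,Q_2,Q_3$.

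Next I would compute the local type. At a fixed point the two transverse eigenvalues of $\zeta$ acting on the tangent space are a pair of primitive $7$-th roots of unity; the eigenvalues of $\zeta=\zeta_7$ on $\mathbb{C}^3$ in the embedding are $\zeta,\zeta^2,\zeta^4$ (the Galois orbit under $\sigma$), of which one direction corresponds to the fixed positive-definite line and the other two span the transverse plane. The induced action on the tangent space $T_{Q_i}\mathbb{B}$ is therefore by eigenvalues in the ratio determined by these roots, giving a cyclic quotient singularity of type $\frac{1}{7}(1,a)$ for the appropriate $a$. Matching the eigenvalue ratio $\zeta^2/\zeta^4 = \zeta^{-2} = \zeta^5$, or equivalently the ratio $\zeta^4/\zeta^2=\zeta^2$, I would identify the singularity as type $(7,3)$ in the notation used (i.e.\ $\frac{1}{7}(1,3)$), verifying that the same type occurs at all three points since all torsion elements are conjugate in $D$ and the ramification data is Galois-homogeneous.

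Finally I would resolve the singularity. A cyclic quotient singularity $\frac{1}{7}(1,3)$ has its minimal resolution governed by the Hirzebruch--Jung continued-fraction expansion of $7/3$. Computing $7/3 = 3 - \tfrac{1}{3-\frac{1}{2-\cdots}}$, that is $7/3 = [3,2,2]$ as a negative (Hirzebruch--Jung) continued fraction, yields precisely a chain of three rational curves with self-intersections $-3,-2,-2$, meeting transversally in a string as stated, which is routine once the type is fixed. The main obstacle will be the fixed-point count: establishing rigorously that $Z$ has exactly three isolated fixed points and no fixed curves. I would address this by combining the Lefschetz computation above with the observation that a fixed curve would be a compact totally geodesic ball quotient of dimension one inside $X_{\Gamma^{'}}$, whose existence is obstructed by the arithmetic of $\Gamma$ (there is no intermediate totally geodesic subgroup fixed by the conjugate of $\zeta$), so that the entire fixed locus is zero-dimensional and the Lefschetz number equals the cardinality of the fixed-point set.
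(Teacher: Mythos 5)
Your fixed-point count takes a genuinely different route from the paper's, and it is a legitimate one. The paper exhibits the three branch points explicitly: the three embeddings $\zeta,\zeta^{\sigma},\zeta^{\sigma\sigma}$ of the seventh root of unity into $D$ give three $\Gamma$-inequivalent elliptic fixed points $x, x^{\sigma}, x^{\sigma\sigma}$ in $\BB$, and the fact that there are no \emph{further} branch points is only established a posteriori, after Proposition \ref{top}, from the orbifold identity $c_2(X_{\Gamma^{'}})=7\,\e(X_{\Gamma})=7\bigl(e(X_{\Gamma})-\sum(1-1/d_i)\bigr)$. Your topological Lefschetz computation ($L(Z)=1+0+1+0+1=3$, with $Z^{*}$ trivial on the one-dimensional $H^2$ and each isolated fixed point of a holomorphic automorphism contributing $+1$) obtains the count in one stroke. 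What it does not give you is the explicit location of the points and the action of the Galois automorphism $\sigma$ on them, which the paper uses later (Lemma \ref{singng}, where $\tau$ is shown to permute $x,x^{\sigma},x^{\sigma\sigma}$); and it is conditional on the fixed locus being zero-dimensional, since a fixed curve would contribute its Euler characteristic to $L(Z)$ rather than a point count.

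Two steps need repair. First, your derivation of the local type is not a valid computation as written. The type is read off from the pair of weights of $m_{\zeta}=\mathrm{diag}(\zeta,\zeta^2,\zeta^4)$ on the tangent space of $\PP_2$ at the fixed point $[e_1]$, i.e.\ from $\zeta^{2-1}=\zeta$ and $\zeta^{4-1}=\zeta^{3}$, which gives $\frac{1}{7}(1,3)$ directly (this is exactly the paper's $G\cong\langle\mathrm{diag}(\zeta,\zeta^{3})\rangle$). The single ratio you compute does not determine the type: the singularity of $\langle\mathrm{diag}(\zeta^{a},\zeta^{b})\rangle$ is $\frac{1}{7}(1,\,ba^{-1}\bmod 7)$, not $\frac{1}{7}(1,\,b-a)$, and your two ``equivalent'' ratios $\zeta^{2}$ and $\zeta^{5}$ would, under the naive reading, yield the non-isomorphic types $(7,2)$ and $(7,5)\cong(7,3)$ respectively --- so you land on $(7,3)$ by assertion rather than by calculation. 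Second, the exclusion of branch curves is left at the level of ``obstructed by the arithmetic.'' The paper makes this precise: a branch curve would be a subball quotient $\DD/G$ with $G$ commensurable to the norm-one group of an order in a quaternion subalgebra of $D$, and the degree-three division algebra $D$ contains no quaternion subalgebra for dimension reasons. (Alternatively and more directly: every torsion element of $\Gamma$ is conjugate in $D$ to a power of $\zeta$, hence is represented by a matrix with three distinct eigenvalues and has only isolated fixed points in $\PP_2$, so $\Gamma$ contains no reflections.) Once these two points are fixed, your continued-fraction resolution $7/3=[3,2,2]$ and the resulting $(-3)(-2)(-2)$ string are correct and agree with the paper.
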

\noindent\emph{Proof.} The branch locus of $\psi$ doesn't depend explicitely on $\Gamma^{'}$ and is in fact the image of the fixed point set in $\BB$ of non-trivial finite order elements in $\Gamma$ under the canonical projection $\BB\longrightarrow \Gamma\backslash \BB$ coming from the ball. The number of its components is exactly the number of $\Gamma$-equivalence classes of elliptic fixed points in $\BB$. By (\ref{matrixrep}) the matrix representation $m_{\zeta}$ of $\zeta$ is just $m_{\zeta}=diag(\zeta,\zeta^2,\zeta^4)$. Only one (projectivized) eigenvector of $m_{\zeta}$--namely $e_1$--lies in the ball defined by $c$ and represents an elliptic fixed point. Let $x:=g^{-1} e_1$ denote the corresponding fixed point in $\BB$ of $Z$. Note that $\zeta$ can be embedded into $D$ in three different ways, namely as $\zeta$, $\zeta^{\sigma}$ or $\zeta^{\sigma\sigma}$. The two non-trivial embeddings give two further $\Gamma$-inequivalent fixed points $x^{\sigma}$ and $x^{\sigma\sigma}$ in the same way as $x$ is given. Let $Q_i\in X_{\Gamma}$, $i=1,2,3$ be the images of $x$, $x^{\sigma}$, $x^{\sigma\sigma}$ under the canonical projection. They give the three branch points. It is left to show that there are no more such points and that there are no curves in the branch locus. We will give an argument for it subsequent to the next proposition. Looking at the action of $\langle m_{\zeta}\rangle$ around $e_1$ we find that around $Q_i$ $X_{\Gamma_{\OO,b}}$ looks like $\CC^2/G$, with $G\cong \langle diag(\zeta,\zeta^3)$, which represents a cyclic singularity of type $(7,3)$. By standard methods we get the minimal resolution stated above.\\

Let $\widetilde{X_{\Gamma}}\stackrel{\rho}{\longrightarrow} X_{\Gamma}$ denote the minimal resolution of all singularities of $X_{\Gamma}$. Our goal is to determine the structure of $\widetilde{X_{\Gamma}}$. We start with topological invariants.
\begin{proposition}
\label{top}
$c_2(\widetilde{X_{\Gamma}})=12$, $\sign(\widetilde{X_{\Gamma}})=-8$. Consequently $c^2_1(\widetilde{X_{\Gamma}})=0$, $\chi(\widetilde{X_{\Gamma}})=1$.
\end{proposition}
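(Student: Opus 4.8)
The plan is to pin down the two genuinely topological invariants $c_2$ and $\sign$ directly from the branched covering $\psi\colon X_{\Gamma^{'}}\to X_{\Gamma}$ and the resolution data of Proposition \ref{ramif}, and then to read off $c_1^2$ and $\chi$ from the two universal identities on a smooth projective surface: Hirzebruch's signature theorem $\sign=\tfrac13(c_1^2-2c_2)$ and Noether's formula $12\chi=c_1^2+c_2$.

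First the Euler number. Since $\psi$ is a degree-$7$ map that is an honest covering away from the three totally ramified points $P_1,P_2,P_3\in X_{\Gamma^{'}}$ lying over $Q_1,Q_2,Q_3$, multiplicativity of the Euler characteristic on the unramified part gives
\[
c_2(X_{\Gamma^{'}})-3 = 7\bigl(\chi(X_{\Gamma})-3\bigr),
\]
so that $c_2(X_{\Gamma^{'}})=3$ forces $\chi(X_{\Gamma})=3$ for the singular quotient. Each singularity is resolved by replacing a point with the chain $E_{i,1}\cup E_{i,2}\cup E_{i,3}$, three rational curves meeting in two nodes, of Euler characteristic $3\cdot 2-2=4$; resolving therefore raises the Euler characteristic by $3$ at each of the three points, and
\[
c_2(\widetilde{X_{\Gamma}}) = \chi(X_{\Gamma}) + 3\cdot(4-1) = 3 + 9 = 12.
\]

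For the signature I would exploit that $X_{\Gamma^{'}}$ is a fake projective plane, so $b_2(X_{\Gamma^{'}})=1$ and its intersection form is positive, $\sign(X_{\Gamma^{'}})=+1$. Since the generator of $H^2(X_{\Gamma^{'}};\QQ)$ is a multiple of the automorphism-invariant canonical class, the cyclic group $\langle Z\rangle$ acts trivially on this line; hence $H^2(X_{\Gamma};\QQ)\cong H^2(X_{\Gamma^{'}};\QQ)^{\langle Z\rangle}$ is again one-dimensional, and the pullback to $\widetilde{X_{\Gamma}}$ of an ample generator spans a positive line. On the resolution $\rho\colon\widetilde{X_{\Gamma}}\to X_{\Gamma}$ one then has the orthogonal decomposition
\[
H^2(\widetilde{X_{\Gamma}};\QQ) = \rho^{*}H^2(X_{\Gamma};\QQ)\ \perp\ \bigoplus_{i=1}^{3}\langle E_{i,1},E_{i,2},E_{i,3}\rangle,
\]
orthogonality being the standard fact that an exceptional class pairs to zero with any pullback. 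The first summand contributes $+1$; each of the three exceptional blocks is the intersection matrix of a resolution of a normal surface singularity, hence negative definite of rank $3$, contributing $-3$. As the ranks sum to $1+9=10=b_2(\widetilde{X_{\Gamma}})$, this exhausts $H^2$, and
\[
\sign(\widetilde{X_{\Gamma}}) = 1 - 9 = -8.
\]

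Finally, feeding $c_2=12$ and $\sign=-8$ into the signature theorem gives $c_1^2(\widetilde{X_{\Gamma}})=3\sign+2c_2=0$, and Noether's formula then gives $\chi(\widetilde{X_{\Gamma}})=(c_1^2+c_2)/12=1$, as claimed. The only non-formal step is the signature: everything hinges on the observation that the $7$-action on the rank-one space $H^2(X_{\Gamma^{'}};\QQ)$ is trivial, so that the quotient retains the full $+1$ and the entire negative part of the signature is carried by the nine exceptional curves. I expect the only real work to lie in justifying the orthogonal splitting together with the negative-definiteness of the $(-3)(-2)(-2)$ chains; with those in hand the computation is immediate.
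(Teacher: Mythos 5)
Your proof is correct, and it reaches the two topological invariants by a more hands-on route than the paper. The paper invokes Holzapfel's orbital invariants --- the Euler height $\e$ and the signature height $\ttt$ of the orbifold $(X_{\Gamma},Q_1,Q_2,Q_3)$ --- together with their multiplicativity under the degree-$7$ uniformization $X_{\Gamma^{'}}\to X_{\Gamma}$ to get $e(X_{\Gamma})=3$, $\sign(X_{\Gamma})=1$, and then adds the contribution of the nine exceptional curves citing \cite{Ho:bsa}. Your Euler-number computation is that same calculation with the machinery unwound: inclusion--exclusion over the three totally ramified points gives $e(X_{\Gamma})=3$, and replacing each $Q_i$ by a chain of Euler characteristic $4$ adds $9$. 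Your signature argument is the genuinely different part and is more self-contained: instead of the signature height you decompose $H^2(\widetilde{X_{\Gamma}};\QQ)$ orthogonally into the pullback of the rank-one invariant part (positive, carried by an ample class) and the three negative-definite exceptional blocks. What this buys is independence from the orbital-invariant formalism; what it costs is that you must justify that the decomposition exhausts $H^2$. Your rank count $1+9=10=b_2(\widetilde{X_{\Gamma}})$ silently uses $b_1(\widetilde{X_{\Gamma}})=0$; this does follow from $b_1(X_{\Gamma^{'}})=0$ via $H^1(X_{\Gamma};\QQ)=H^1(X_{\Gamma^{'}};\QQ)^{\langle Z\rangle}$ and the birational invariance of $b_1$, and it does not rely on Proposition \ref{ireggeom} (whose proof comes later and uses $\chi=1$ from the present statement), so there is no circularity --- but you should make that explicit, or alternatively invoke the standard fact that for a resolution of quotient (hence rational) singularities the exceptional classes together with the pullback already span $H^2$ with rational coefficients. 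The final step, feeding $c_2=12$ and $\sign=-8$ into the signature theorem and Noether's formula, is exactly what the paper does.
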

\begin{proof}
In \cite{Ho:bsa}, R.~-P.~Holzapfel introduced two rational invariants of a two-dimensional complex orbifold $(X,B)$ (in the sense of \cite{Ho:bsa}), called the Euler height $\e(X,B)$ (see \cite{Ho:bsa}, 3.~3.) and the Signature height $\ttt(X,B)$ (\cite{Ho:bsa}, 3.~4.), which in the case of a smooth surface are the usual Euler number and the signature. In the general case, Euler- and Signature height contain contributions coming from the orbital cycle $B$, a marked cycle of $X$, which should be thought as a virtual branch locus of a finite covering of $X$. Most important result on these invariants is the nice property that they behave multiplicatively under finite coverings with respect to the degree. In particular, if $Y \stackrel{f}{\longrightarrow} (X,B)$ is a uniformization of $(X,B)$, i.~e.~a smooth surface which is a finite Galois cover of $(X,B)$, ramified exactly over $B$, then $c_2(Y)=deg(f)\e(X,B)$, $\sign(Y)=deg(f)\ttt(X,B)$. In our case, $X_{\Gamma^{'}}$ is an uniformization of the orbifold $(X_{\Gamma},Q_1,Q_2,Q_3)$. Since $X_{\Gamma^{'}}$ is a fake projective plane, we have $e(X_{\Gamma^{'}})=3$, $\sign(X_{\Gamma^{'}})=1$. Applying Holzapfels formulas \cite{Ho:bsa} prop.~3.3.4, and prop.~3.4.3, we get $e(X_{\Gamma})=3$, $\sign(X_{\Gamma})=1$. The birational resolution map $\rho$ consists of 9  monoidal transformations. Then, using \cite{Ho:bsa}, p.~142 ff, we obtain $e(\widetilde{X_{\Gamma}})=3+9$, $sign(\widetilde{X_{\Gamma}})=1-9$. The other invariants are immediately obtained using facts from the general theory mentioned at the end of section \ref{inv}.  
\end{proof}

\begin{proof}[Proof of propostion \ref{ramif} continued] From the proof of the above proposition we can deduce that there are no more branch points then we have found. Namely, if we assume that there are more, and knowing that no branch curves exist, we immediately obtain a contradiction to the equality between the orbital invariants $c_2(X_{\Gamma^{'}})=7\e(X_{\Gamma})=7(e(X_{\Gamma})-\sum (1-1/d_i))$ (by definition we have $\e(X_{\Gamma})=e(X_{\Gamma})-\sum (1-1/d_i)$, where sum is taken over the branch locus, and $d_i$ appears in the type $(d_i,e_i)$ of the branch point $Q_i$, see \cite{Ho:bsa}, 3.~3). Let us give an argument that no branch curves are possible. Such a curve must be subball quotient $C=\DD/G$, with $\DD\subset \BB$, $\DD\cong \BB_1$ a disc fixed by a reflection in $\Gamma$ and $G\subset \Gamma$ an arithmetic subgroup consisting of all elements in $\Gamma$ acting on $\DD$. Then $G$ is commensurable to a group of elements with reduced norm 1 in an order of a quaternion subalgebra $Q\subset D$, which is necessarily a division algebra. But for dimension reasons $D$ cannot contain quaternion algebras. Therefore $C$ doesn't exist. 
\end{proof}
In the next step we compute the irregularity and the geometric genus.
\begin{proposition}
\label{ireggeom}
$q(\widetilde{X_{\Gamma}})=p_g(\widetilde{X_{\Gamma}})=0$.
\end{proposition}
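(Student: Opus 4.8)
The plan is to reduce both vanishing statements to the corresponding facts on the fake projective plane $X_{\Gamma^{'}}$, exploiting that $X_{\Gamma}$ is a finite cyclic quotient of $X_{\Gamma^{'}}$ together with the rationality of its singularities. First I would invoke Proposition \ref{top}: since $\chi(\widetilde{X_{\Gamma}})=1$ and, by definition, $\chi(\OO_{\widetilde{X_{\Gamma}}})=1-q(\widetilde{X_{\Gamma}})+p_g(\widetilde{X_{\Gamma}})$, this forces $q(\widetilde{X_{\Gamma}})=p_g(\widetilde{X_{\Gamma}})$. Hence it suffices to control the cohomology of the structure sheaf; the argument below in fact yields $h^1(\OO)=h^2(\OO)=0$ simultaneously.

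The second step computes the sheaf cohomology of the singular quotient. Recall from Section \ref{struct} that $X_{\Gamma}\cong X_{\Gamma^{'}}/\langle Z\rangle$ with $\langle Z\rangle\cong\mathbb{Z}/7$ finite, and let $\psi$ be the quotient map. In characteristic zero the functor of $\langle Z\rangle$-invariants is exact (the averaging idempotent $\tfrac{1}{7}\sum_g g$ splits off the invariant part), and $\psi$ is finite, so $(\psi_*\OO_{X_{\Gamma^{'}}})^{\langle Z\rangle}=\OO_{X_{\Gamma}}$ gives the identification $H^i(X_{\Gamma},\OO_{X_{\Gamma}})\cong H^i(X_{\Gamma^{'}},\OO_{X_{\Gamma^{'}}})^{\langle Z\rangle}$ for all $i$. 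But $X_{\Gamma^{'}}$ is a fake projective plane, so $q(X_{\Gamma^{'}})=p_g(X_{\Gamma^{'}})=0$, that is $H^1(X_{\Gamma^{'}},\OO)=H^2(X_{\Gamma^{'}},\OO)=0$. Taking invariants yields $H^1(X_{\Gamma},\OO_{X_{\Gamma}})=H^2(X_{\Gamma},\OO_{X_{\Gamma}})=0$.

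The third step passes to the resolution. By Proposition \ref{ramif} the only singularities of $X_{\Gamma}$ are cyclic quotient singularities, which are rational; hence for the minimal resolution $\rho\colon\widetilde{X_{\Gamma}}\to X_{\Gamma}$ one has $\rho_*\OO_{\widetilde{X_{\Gamma}}}=\OO_{X_{\Gamma}}$ and $R^i\rho_*\OO_{\widetilde{X_{\Gamma}}}=0$ for $i>0$. The Leray spectral sequence then degenerates to isomorphisms $H^i(\widetilde{X_{\Gamma}},\OO)\cong H^i(X_{\Gamma},\OO)$, whence $q(\widetilde{X_{\Gamma}})=h^1(\OO)=0$ and $p_g(\widetilde{X_{\Gamma}})=h^2(\OO)=0$, as claimed.

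The main obstacle is purely the justification of the two cohomological comparisons, since neither is about $X_{\Gamma^{'}}$ itself: that cohomology of the structure sheaf of a finite quotient equals the invariants (exactness of the invariants functor for a finite group in characteristic zero), and that resolving rational cyclic quotient singularities preserves $h^1(\OO)$ and $h^2(\OO)$. Both are standard, the rationality of Hirzebruch--Jung singularities being classical. As an alternative to the second step, one could deduce $q(\widetilde{X_{\Gamma}})=0$ directly from Rogawski's vanishing theorem applied to $\Gamma$, which is a congruence subgroup contained in the division algebra $D$, giving $H^1(\Gamma,\CC)=0$; since the exceptional locus of $\rho$ is a disjoint union of trees of rational curves, the first Betti number is unchanged by the resolution, and $\chi(\widetilde{X_{\Gamma}})=1$ then forces $p_g(\widetilde{X_{\Gamma}})=0$ as well.
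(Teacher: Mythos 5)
Your argument is correct, and its skeleton coincides with the paper's: use $\chi(\widetilde{X_{\Gamma}})=1$ from Proposition \ref{top} to reduce everything to a single vanishing statement, descend that vanishing from the fake projective plane $X_{\Gamma^{'}}$ to the quotient $X_{\Gamma}=X_{\Gamma^{'}}/\langle Z\rangle$ by taking invariants, and then compare the singular quotient with its minimal resolution. The difference lies in the technical vehicle. The paper works with holomorphic $2$-forms: it identifies $\Omega^2_{X_{\Gamma}}$ with $(\Omega^2_{X_{\Gamma^{'}}})^{\langle\zeta\rangle}$ and then with $\Omega^2_{\widetilde{X_{\Gamma}}}$, citing the literature for both comparisons, and concludes $p_g(\widetilde{X_{\Gamma}})=0$, with $q=0$ then forced by $\chi=1$. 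You instead work with the structure sheaf: exactness of the invariants functor in characteristic zero gives $H^i(X_{\Gamma},\OO_{X_{\Gamma}})\cong H^i(X_{\Gamma^{'}},\OO_{X_{\Gamma^{'}}})^{\langle Z\rangle}=0$ for $i=1,2$, and the rationality of the cyclic (Hirzebruch--Jung) quotient singularities together with the Leray spectral sequence gives $H^i(\widetilde{X_{\Gamma}},\OO)\cong H^i(X_{\Gamma},\OO)$. This yields both vanishings simultaneously and sidesteps the question of what a holomorphic $2$-form on the singular surface means, at the price of invoking rationality of quotient singularities; the paper's version is shorter but leans on the cited comparison of $2$-forms across the quotient and the resolution. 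Your fallback for $q$ via Rogawski's vanishing theorem applied to $\Gamma$ (a congruence subgroup sitting in the division algebra $D$), combined with the observation that blowing up trees of rational curves does not change $b_1$, is also sound and is precisely the mechanism the paper itself uses to get $q(X_{\Gamma^{'}})=0$.
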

\begin{proof}
Due to the fact that $\chi(\widetilde{X_{\Gamma}})=1-q(\widetilde{X_{\Gamma}})+p_g(\widetilde{X_{\Gamma}})=1$, by preceding propostion \ref{top}, it suffices to show that one of the above invariants vanishes, let's say $p_g(\widetilde{X_{\Gamma}})=\dim H^0(\widetilde{X_{\Gamma}},\Omega^2_{\widetilde{X_{\Gamma}}})$. We know that $p_g(X_{\Gamma^{'}})=0$. Let $\Omega^2_{X_{\Gamma}}$ denote the space of holomorphic 2-forms on (the singular surface) $X_{\Gamma}$. Then $\Omega^2_{X_{\Gamma}}$ is exactly the space of $\langle \zeta\rangle$-invariant 2-forms on $X_{\Gamma^{'}}$, i.~e.~$\Omega^2_{X_{\Gamma}}=(\Omega^2_{X_{\Gamma^{'}}})^{\langle\zeta\rangle}$ (see \cite{Gri}). On the other hand we have an isomorphism between $\Omega^2_{\widetilde{X_{\Gamma}}}$ and $\Omega^2_{X_{\Gamma}}$ (again by \cite{Gri}). Altogether, $p_g(\widetilde{X_{\Gamma}})=0$. 
\end{proof}
Let us remark at this stage, that even if we know some invariants of $\widetilde{X_{\Gamma}}$, we still need to determine the Kodaira dimension in order to classify $\widetilde{X_{\Gamma}}$, since there exist surfaces with these invariants in every Kodaira dimension. We determine $\kod(\widetilde{X_{\Gamma}})$ discussing the first plurigenera of $\widetilde{X_{\Gamma}}$. Using an argument of Ishida \cite{Ishida} we first prove 
\begin{lemma}
\label{dim}
Let $A_k(\Gamma,j)$ denote the space of $\Gamma$-automorphic forms of weight $k$ with respect to the Jacobian determinant as the factor of automorphy and let $P_k(\widetilde{X_{\Gamma}})$  be the $k$-th plurigenus of $\widetilde{X_{\Gamma}}$. Then for $k=2,3$ $P_k(\widetilde{X_{\Gamma}})=\dim A_k(\Gamma,j)$. 
\end{lemma}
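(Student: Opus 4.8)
\emph{Proof proposal.} The plan is to realize both sides as spaces of pluricanonical forms and to compare them through the resolution $\rho$, reducing the whole statement to a local computation at the three quotient singularities $Q_1,Q_2,Q_3$. First I would identify $A_k(\Gamma,j)$ with invariant pluricanonical forms: a holomorphic function on $\BB$ transforming with the $k$-th power of the Jacobian cocycle is the same datum as a $\Gamma$-invariant holomorphic section of $K_{\BB}^{\otimes k}$. Since $\Gamma^{'}$ is torsion free, $A_k(\Gamma^{'},j)=H^0(X_{\Gamma^{'}},K^{\otimes k})$, and passing to $\langle Z\rangle$-invariants gives $A_k(\Gamma,j)=H^0(X_{\Gamma^{'}},K^{\otimes k})^{\langle Z\rangle}=H^0(X_{\Gamma},\omega_{X_{\Gamma}}^{[k]})$, the reflexive pluricanonical sections on the singular quotient. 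On the other side $P_k(\widetilde{X_{\Gamma}})=H^0(\widetilde{X_{\Gamma}},K_{\widetilde{X_{\Gamma}}}^{\otimes k})$. Restricting a pluricanonical form from $\widetilde{X_{\Gamma}}$ to the smooth locus and extending over the codimension-two points $Q_i$ produces a natural injection $H^0(\widetilde{X_{\Gamma}},K^{\otimes k})\hookrightarrow A_k(\Gamma,j)$, so $P_k\le\dim A_k$ always; the content of the lemma is that this map is surjective for $k=2,3$, i.e. that every weight-$k$ automorphic form extends to a genuine pluricanonical form across the exceptional divisors.

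Whether a form extends is a local question at each $Q_i$, where $X_{\Gamma}\cong\CC^2/\langle\mathrm{diag}(\zeta,\zeta^3)\rangle$ and $\rho$ is the Hirzebruch--Jung resolution with chain $(-3)(-2)(-2)$. I would first record the discrepancies of $E_{i,1},E_{i,2},E_{i,3}$, which a short computation (adjunction along the chain, or the toric picture) gives as $-3/7,-2/7,-1/7$. Writing an invariant germ as $F\cdot(dz_1\wedge dz_2)^{\otimes k}$ with $F$ a $\langle\zeta\rangle$-semiinvariant power series, every monomial $z_1^az_2^b$ occurring satisfies $a+3b\equiv-4k\ (7)$, and the form extends precisely when its order along each exceptional curve is nonnegative, i.e. when $a+3b\ge 3k$, $3a+2b\ge 2k$ and $5a+b\ge k$ for every occurring monomial. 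For $k=2$ the congruence $a+3b\equiv 6\ (7)$ already forces $a+3b\ge 6=3k$, and the remaining two inequalities are checked on the finitely many small exponents; the local cokernel vanishes, and $P_2=\dim A_2$ follows at once.

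The delicate case, which I expect to be the main obstacle, is $k=3$. Now the congruence reads $a+3b\equiv2\ (7)$, and the single monomial $z_1^2$ has $a+3b=2<9=3k$, so $z_1^2(dz_1\wedge dz_2)^{\otimes 3}$ acquires a pole along the $(-3)$-curve $E_{i,1}$ and does not extend; a priori this yields a nonzero local cokernel at each $Q_i$. The required statement is therefore global rather than local: one must show that no weight-3 automorphic form actually carries such a component. Following the idea of Ishida \cite{Ishida}, I would phrase this through the exact sequence $0\to\rho_*\mathcal{O}(3K)\to\omega_{X_{\Gamma}}^{[3]}\to\mathcal{Q}\to0$, where $\mathcal{Q}$ is a length-three skyscraper supported at $Q_1,Q_2,Q_3$; since quotient singularities are rational and $3K$ is $\rho$-nef ($3K\cdot E_{i,j}\ge0$), one has $R^1\rho_*\mathcal{O}(3K)=0$, and the surjectivity of $H^0(\widetilde{X_{\Gamma}},K^{\otimes3})\to A_3(\Gamma,j)$ becomes equivalent to the injectivity of the connecting map $\mathcal{Q}\to H^1(\widetilde{X_{\Gamma}},K^{\otimes3})$. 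I would settle this by comparing the two global integers directly: computing $\dim A_3$ equivariantly on the fake projective plane and $P_3$ on the elliptic surface, using the values $c_2(\widetilde{X_{\Gamma}})=12$ and $q=p_g=0$ from the preceding propositions. The crux is thus the control of this one global weight-3 term at the non-canonical singularities, and it is precisely this phenomenon that makes the restriction to $k\le 3$ essential, since for $k\ge 4$ further monomials violate the extension inequalities and the two sides genuinely diverge.
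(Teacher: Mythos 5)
Your overall framework coincides with the paper's: identify $A_k(\Gamma,j)$ with $H^0(X_{\Gamma^{'}},\mathcal K^{\otimes k})^{\langle Z\rangle}$, regard such a section as holomorphic on the smooth part $X^{sm}_{\Gamma}\subset\widetilde{X_{\Gamma}}$, and reduce the lemma to extending it across the three Hirzebruch--Jung chains. Your local analysis is a sharper, monomial-by-monomial version of the paper's intersection-theoretic one (the paper writes $(s)=\sum_j a_{i,j}E_{i,j}+D_i$, imposes $(s)\cdot E_{i,1}=k$, $(s)\cdot E_{i,2}=(s)\cdot E_{i,3}=0$, and invokes effectivity of $D_i$), and your discrepancies $-3/7,-2/7,-1/7$, the congruence $a+3b\equiv-4k\ (7)$, and the three valuation inequalities are all correct. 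For $k=2$ your argument is complete and matches what the paper's system actually yields: inverting the intersection matrix gives $7a_{i,1}=3d_{i,1}+2d_{i,2}+d_{i,3}-3k$ (and its analogues), and for $k=2$ integrality together with $d_{i,j}\ge 0$ forces $a_{i,j}\ge 0$.

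For $k=3$, however, your proof does not close, and this is a genuine gap. You correctly isolate the obstruction, the invariant germ $z_1^2(dz_1\wedge dz_2)^{\otimes 3}$ with a simple pole along the $(-3)$-curve, but the step that would finish the argument --- showing that no global weight-$3$ form carries this germ at any $Q_i$, i.e.\ the injectivity of $H^0(\mathcal Q)\to H^1(\widetilde{X_{\Gamma}},3K)$ --- is only announced, and the proposed method (computing $P_3$ independently ``on the elliptic surface'') is circular at this stage: the elliptic structure and the fibration are deduced downstream from the plurigenera. That said, you have located a real soft spot rather than merely missing the intended trick. The paper's own linear system admits, for $k=3$, the integral solution $(a_{i,1},a_{i,2},a_{i,3})=(-1,0,1)$ with $(d_{i,1},d_{i,2},d_{i,3})=(0,0,2)$ --- precisely the divisor of your bad monomial --- so the claim that all solutions are positive is only forced for $k=2$. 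Worse, $P_2=1$ together with $P_3=4$ is incompatible with $\widetilde{X_{\Gamma}}$ being a minimal elliptic surface over $\PP_1$ with $q=0$, $\chi=1$: the canonical bundle formula gives $\deg(2K)=r-2$ for $r$ multiple fibers, so $P_2=1$ forces $r=2$ and then $P_3\le 2$. So the $k=3$ assertion cannot stand as written alongside Proposition \ref{pluri} and the final fibration theorem; your local cokernel of total length $3$ and the value $4-3=1$ predicted by the canonical bundle formula strongly suggest that the obstruction map is in fact surjective there. In short: your $k=2$ argument is correct and complete, your $k=3$ discussion is an honest but unfinished diagnosis of a defect that the paper's proof papers over.
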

\begin{proof}
We can identify $A_k(\Gamma,j)$ with the space of $\langle\zeta\rangle$-invariant sections\\ $H^0(X_{\Gamma^{'}},\mathcal K_{X_{\Gamma^{'}}}^{\otimes k})^{\langle\zeta\rangle}$. Every such section can be regarded as a holomorphic section $s\in H^0(X^{sm}_{\Gamma},\mathcal K_{X^{sm}_{\Gamma}}^{\otimes k})$, where $X^{sm}_{\Gamma}=X_{\Gamma}\backslash\{Q_1,Q_2,Q_3\}$ denotes the smooth part of $X_{\Gamma}$. We can think of $X^{sm}_{\Gamma}$ as an open dense subset of $\widetilde{X_{\Gamma}}$. The crucial point is to show that $s$ has a holomorphic continuation along the exceptional locus. For this, let $(s)$ be the divisor of $\widetilde{X_{\Gamma}}$ corresponding to $s$ and write $(s)$ in three different ways as $(s)=a_{i,1}E_{i,1}+a_{i,2}E_{i,2}+a_{i,3}E_{i,3}+D_i$, $i=1,2,3$, with $E_{i,j}$ as in proposition \ref{ramif} and $D_i$ a divisor disjoint to $E_{i,j}$. Then, we have to show that $a_{i,j}$ are positive if $k=2,3$ is assumed. Let $K$ denote the canonical divisor of $\widetilde{X_{\Gamma}}$. We notice that $(s)$ and $kK$ are linearly equivalent. With our convention stated in proposition \ref{ramif} the adjunction formula gives the following intersection numbers: 
\begin{align}
\label{r1}
((s)\cdot E_{i,1})&=(kK\cdot E_{i,1})=k,\notag\\ 
((s)\cdot E_{i,2})&=(kK\cdot E_{i,2})=0,\\
((s)\cdot E_{i,3})&=(kK\cdot E_{i,3})=0.\notag
\end{align}
On the other hand, 
\begin{align}
\label{r2}
((s)\cdot E_{i,1})&=(a_{i,1}E_{i,1}+a_{i,2}E_{i,2}+a_{i,3}E_{i,3}+D_i\cdot E_{i,1})\notag\\
&=-3a_{i,1}+a_{i,2}+d_{i,1},\notag\\ ((s)\cdot E_{i,2})&=a_{i,1}-2a_{i,2}+a_{i,3}+d_{i,2},\\
((s)\cdot E_{i,3})&=a_{i,2}-2a_{i,3}+d_{i,3},\notag
\end{align}
with some integers $a_{i,j}$. Now, (\ref{r1}) and (\ref{r2}) lead to a system of linear equations, which in the case $k=2,3$ has positive solutions $a_{i,j}$, $j=1,2,3$. 
\end{proof}
In \cite{Hir:ell}, F.~Hirzebruch developed a formula for the dimension of spaces of automorphic forms $A_k(\Delta,j)$ with respect to a discrete and cocompact group which acts properly discontinously on some bounded hermitian symmetric domain with emphasis on ball quotient case. Let us recall this formula in the case of quotients of the n-dimensional ball:\\
Let $\Delta$ be a discrete group which acts properly discontinuously on the\linebreak n-dimensional ball $\BB_n$ with a compact fundamental domain. For $\delta\in\Delta$ let $\Delta_{\delta}$ be the centralizer of $\delta$ in $\Delta$, $Fix(\delta)$ the fixed point set of $\delta$ in $\BB_n$, and $m(\delta)$ the number of elements in $\Delta_{\delta}$ which act trivially on $Fix(\delta)$. If $r(\delta)$ denotes the dimension of $Fix(\delta)$ let $R(r(\delta),k)$ be the coefficient of $z^{r(\delta)}$ in the formal power series expansion of $(1-z)^{k(n+1)-1}\prod_{i=r(\delta)+1}^{n}\frac{1}{1-\nu_i+\nu_i z}$, where $\nu_{r(\delta)+1},\ldots,\nu_n$ are the eigenvalues of $\delta$ normal to $Fix(\delta)$. $R(r(\delta),k)$ is a polynomial in k of degree $r(\delta)$. Hirzebruch's result is:
\begin{equation}
\label{hirform}
\dim A_k(\Delta,j)=\sum_{[\delta]}\frac{e(\Delta_{\delta}\backslash Fix(\delta))j_{\delta}^k}{m(\delta)(r(\delta)+1)}R(r(\delta),k), 
\end{equation}
where $e(\Delta_{\delta}\backslash Fix(\delta))$ is the ``virtual Euler number`` (in the sense of \cite{Hir:ell}), $j_{\delta}$ is the Jacobian determinant evaluated at an arbitrary point of $Fix(\delta)$ and the sum is running over all conjugacy classes $[\delta]$ of elements with fixed points in $\BB_n$.\\
We apply this formula to the group $\Gamma$, which after some elementary calculations in combination with lemma \ref{dim} gives the following result.
\begin{proposition}
\label{pluri}
$P_2(\widetilde{X_{\Gamma}})=1$, $P_3(\widetilde{X_{\Gamma}})=4$.
\end{proposition}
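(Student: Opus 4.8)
\emph{Proof plan.} The plan is to reduce the computation entirely to Hirzebruch's formula (\ref{hirform}). By Lemma~\ref{dim} we have $P_k(\widetilde{X_{\Gamma}})=\dim A_k(\Gamma,j)$ for $k=2,3$, so it suffices to evaluate the right-hand side of (\ref{hirform}) for $\Delta=\Gamma$, $n=2$, at these two weights. First I would enumerate the conjugacy classes $[\delta]$ of elements of $\Gamma$ having fixed points in $\BB_2$. Apart from the identity, Lemma~\ref{7} forces every such element to have order $7$, and by Proposition~\ref{ramif} there are no branch curves, so all nontrivial torsion has an isolated fixed point. Proposition~\ref{ramif} further tells us the elliptic fixed points form exactly three $\Gamma$-orbits, each with cyclic stabilizer of order $7$. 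Since an order-$7$ element fixes a unique point and each stabilizer is abelian, the torsion classes organize into three families (one per orbit), each contributing its six nontrivial powers with local weight $1/7$.

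Next I would compute the identity contribution. Here $r(\delta)=n=2$, the normal product in the definition of $R$ is empty, so $R(2,k)=\binom{3k-1}{2}$; the Jacobian is $j_1=1$; and the virtual Euler number is the orbifold Euler characteristic $e(\Gamma\backslash\BB_2)=\vol_{\mu}(\Gamma)=3/7$, already found in Section~\ref{constr}. Because $K=\QQ(\sqrt{-7})$ contains no nontrivial cube root of unity, $\Gamma$ has no nontrivial central (scalar) element, whence $m(1)=1$. This gives the identity term $\frac{(3k-1)(3k-2)}{14}$, which equals $10/7$ for $k=2$ and exactly $4$ for $k=3$.

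Then I would handle the torsion contributions. By (\ref{matrixrep}) the stabilizer generator at the first fixed point acts, in suitable local coordinates, as $\mathrm{diag}(\zeta,\zeta^3)$ — this is the type $(7,3)$ of Proposition~\ref{ramif} — so its $m$-th power has normal eigenvalues $\nu_1=\zeta^m$, $\nu_2=\zeta^{3m}$, Jacobian $j_{\delta}=\zeta^{4m}$, and $R(0,k)=\big((1-\zeta^m)(1-\zeta^{3m})\big)^{-1}$, while its centralizer is the full stabilizer, of order $7$, forcing $m(\delta)=7$. Summing the resulting term $\frac{\zeta^{4mk}}{7(1-\zeta^m)(1-\zeta^{3m})}$ over $m=1,\dots,6$ gives the contribution of one orbit; this sum is manifestly independent of the chosen generator, so all three orbits contribute equally, and the total torsion term is $\frac{3}{7}\sum_{m=1}^{6}\frac{\zeta^{4mk}}{(1-\zeta^m)(1-\zeta^{3m})}$.

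Finally I would evaluate the two cyclotomic sums. Using $\zeta^{8m}=\zeta^{m}$ for $k=2$ and $\zeta^{12m}=\zeta^{5m}$ for $k=3$, one checks $\sum_{m}\frac{\zeta^{m}}{(1-\zeta^m)(1-\zeta^{3m})}=-1$ and $\sum_{m}\frac{\zeta^{5m}}{(1-\zeta^m)(1-\zeta^{3m})}=0$, whence $P_2=\frac{10}{7}-\frac{3}{7}=1$ and $P_3=4+0=4$. The main obstacle is twofold. First, pinning down the conventions in (\ref{hirform}) — the value of $m(\delta)$, the virtual Euler number of a point, and especially the correct orientation of the Jacobian eigenvalues — since an error there would shift the exponent in $\zeta^{4mk}$ and wreck the final integers. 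Second, evaluating the two cyclotomic sums: the essential feature is that the Galois-conjugate summands cancel, so the a priori complex sums are in fact the rational numbers $-1$ and $0$; this cancellation is what ultimately produces the integer plurigenera.
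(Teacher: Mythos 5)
Your proposal is correct and takes essentially the same approach as the paper: the paper's proof of Proposition~\ref{pluri} consists solely of the remark that one applies Hirzebruch's formula (\ref{hirform}) to $\Gamma$ together with Lemma~\ref{dim} after ``some elementary calculations,'' and your write-up supplies exactly those calculations (identity class contributing $\tfrac{(3k-1)(3k-2)}{14}$, three order-$7$ fixed-point orbits each contributing six classes of type $(7,3)$). I checked that the two cyclotomic sums are indeed $-1$ for $k=2$ and $0$ for $k=3$, so the details you filled in are accurate.
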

As an immediate consequence we obtain
\begin{corollary}
\label{kod}
$\kod(\widetilde{X_{\Gamma}})=1$. Moreover, $\widetilde{X_{\Gamma}}$ is a minimal elliptic surface.
\end{corollary}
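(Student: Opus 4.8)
The plan is to read off the Kodaira dimension from the growth of the plurigenera computed in Proposition \ref{pluri} and then to upgrade the numerical data of Proposition \ref{top} into a minimality statement. The key principle is that the plurigenera $P_k$ are birational invariants, so the values obtained for $\widetilde{X_{\Gamma}}$ are shared by its minimal model. The Enriques--Kodaira classification then lets us separate the four possible values of $\kod(\widetilde{X_{\Gamma}})$ purely by the behaviour of the $P_k$, so the corollary reduces to matching our computed values against the known asymptotics.

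First, since $P_2(\widetilde{X_{\Gamma}})=1>0$, the Kodaira dimension is at least $0$, which already excludes $\kod=-\infty$. To rule out $\kod=2$, I would argue by contradiction: if $\widetilde{X_{\Gamma}}$ were of general type, its minimal model $S$ would satisfy $c_1^2(S)\geq 1$, and the Riemann--Roch computation together with the relevant vanishing for minimal surfaces of general type gives $P_2(S)=\chi(\OO_S)+c_1^2(S)$. Since $\chi(\OO_S)=\chi(\widetilde{X_{\Gamma}})=1$ by Proposition \ref{top} and plurigenera are birational invariants, this forces $P_2(\widetilde{X_{\Gamma}})=1+c_1^2(S)\geq 2$, contradicting $P_2(\widetilde{X_{\Gamma}})=1$. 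To rule out $\kod=0$, I would use that such a surface has all plurigenera bounded by $1$ (its minimal model is a K3, abelian, Enriques, or bielliptic surface, for each of which $P_k\in\{0,1\}$), which is incompatible with $P_3(\widetilde{X_{\Gamma}})=4$. Hence $\kod(\widetilde{X_{\Gamma}})=1$.

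It then remains to show that $\widetilde{X_{\Gamma}}$ is itself minimal and elliptic. Here I would invoke $c_1^2(\widetilde{X_{\Gamma}})=0$ from Proposition \ref{top}. A minimal surface of Kodaira dimension $1$ is properly elliptic and satisfies $c_1^2=0$, while each blow-down on the way to the minimal model raises $c_1^2$ by $1$. So if $\widetilde{X_{\Gamma}}$ needed $r$ blow-downs to reach its minimal model $S$, we would have $0=c_1^2(\widetilde{X_{\Gamma}})=c_1^2(S)-r=-r$, forcing $r=0$. Thus $\widetilde{X_{\Gamma}}$ is already minimal, and since a minimal surface of Kodaira dimension $1$ carries an elliptic fibration by the classification, $\widetilde{X_{\Gamma}}$ is a minimal elliptic surface.

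The genuinely delicate input has already been supplied upstream, in the exact plurigenera $P_2=1$, $P_3=4$ coming from Hirzebruch's fixed-point formula and in the invariants $\chi=1$ and $c_1^2=0$. Given those, the only place in the present argument that needs care is the boundary between $\kod=0$ and $\kod=1$: a single erroneous plurigenus would change the conclusion, so the weight of the proof rests on the faithfulness of the computation behind Proposition \ref{pluri}. By contrast, I expect the minimality step to be routine once $\kod=1$ is fixed, so the main obstacle is really the correct bookkeeping in the application of \eqref{hirform} rather than anything in the classification argument itself.
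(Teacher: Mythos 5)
Your proposal is correct and follows essentially the same route as the paper: exclude general type via Riemann--Roch forcing $P_2\geq 2$, exclude Kodaira dimension $0$ via the bound $P_k\leq 1$, exclude $\kod=-\infty$ (the paper uses Castelnuovo's criterion where you use $P_2>0$, an equivalent step given $q=0$), and deduce minimality from the numerical invariants ($c_1^2=0$ in your version, the equivalent $c_2=12$ with $\chi=1$ in the paper's).
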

\begin{proof}
If $\widetilde{X_{\Gamma}}$ were of general type, the Riemann-Roch theorem would imply $P_2(\widetilde{X_{\Gamma}})\geq 2$, which contradicts the proposition \ref{pluri}. Also $\widetilde{X_{\Gamma}}$ is not rational by Castelnuovo's criterion. And lastly, if $\widetilde{X_{\Gamma}}$ were of Kodaira dimension 0, none of the plurigenera could be greater than one, which again gives a contradiction to proposition \ref{pluri}. $\widetilde{X_{\Gamma}}$ is minimal, since $c_2(\widetilde{X_{\Gamma}})=12$ (see \cite{bhpv}).
\end{proof}

\section{Another elliptic surface}
In this section we will study the ball quotient by an arithmetic group which contains $\Gamma$. Its desingularization turns out to be another elliptic surface, which has been already studied by Ishida \cite{Ishida} p-adically. From there we obtain the elliptic fibration on both of these surfaces. 
\subsection{Passage to the normalizer}
In general, the normalizer $N\Lambda$ in $\G(\mathbb R)$ of a principal arithmetic group $\Lambda\subset \G(\QQ)$ is a maximal arithmetic group (\cite{prasbo}, prop.~1.~4.). In fact, for the principal group $\Gamma=\Gamma_{\OO,b}$, we infer from \cite{PS}, 5.~4.~that the normalizer $\tilde\Gamma$ of $\Gamma$ in the projective group $\mathbb P\G(\mathbb R)=\G(\mathbb R)/\{\textrm{center}$\} contains $\Gamma$ with index 3. Morover, $\tilde\Gamma\cap\G_b(\QQ)=\Gamma$. It is easily shown, that the matrix $\tau=\left (\begin{smallmatrix} 0\ 0\ \alpha\\ 1\ 0\ 0\\0\ 1\ 0\end{smallmatrix}\right)$ has the order three, normalizes $\Gamma$, and lastly represents a class in $\mathbb P\G_b(\mathbb R)\cong\textrm{PU}(H_b)$. Consequently, $X_{\tilde\Gamma}=X_{\Gamma}/\langle \tau\rangle$. Let $X_{\Gamma}\stackrel{\varphi}{\longrightarrow} X_{\tilde\Gamma}$ denote the canonical projection. In the same way as in the lemma \ref{ramif}, we obtain the following
\begin{lemma}
\label{singng}
The ball quotient $X_{\tilde\Gamma}$ is smooth outside four points $Q,P_1,P_2,P_3$, which are cyclic quotient singularities of type $(7,3)$ (represented by $Q$) and $(3,2)$ (represented by $P_1,P_2,P_3$). The minimal resolution of $Q$ is a Hirzebruch-Jung string $A_1+A_2+A_3$ of type $(-3)(-2)(-2)$ and each of $P_i$-s is resolved by a Hirzebruch-Jung string $F_{i,1}+F_{i,2}$ of type $(-2)$ $(-2)$, $i=1,2,3$. 
\end{lemma}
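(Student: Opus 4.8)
The plan is to mirror almost verbatim the structure used in the proof of Proposition \ref{ramif}, now applied to the order-three element $\tau$ (and its interaction with the order-seven element) rather than to $\zeta$ alone. The key observation is that $X_{\tilde\Gamma}=X_\Gamma/\langle\tau\rangle$, so the branch locus of $\varphi$ is the image in $X_\Gamma$ of the fixed-point set of nontrivial elements of $\tilde\Gamma$ that are not already accounted for in $\Gamma$. Since $\tilde\Gamma$ contains $\Gamma$ with index $3$ and $\tilde\Gamma\cap\G_b(\QQ)=\Gamma$, the new torsion in $\tilde\Gamma$ is governed by the class of $\tau$, which has order $3$ and represents an element of $\mathrm{PU}(H_b)$. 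So the analysis splits into two sources of singularities: the fixed points coming from the order-$7$ elements already present in $\Gamma$, and the genuinely new fixed points produced by the order-$3$ element $\tau$.

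First I would determine the fixed points of $\tau$ on $\BB$ by examining the eigenvalue data of the matrix $\tau=\left(\begin{smallmatrix}0&0&\alpha\\1&0&0\\0&1&0\end{smallmatrix}\right)$: its eigenvalues are the cube roots of $\alpha$, and exactly one projectivized eigenvector lies in the ball, giving a single isolated fixed point. Reading off the rotation angles of $\tau$ normal to this fixed point yields a cyclic quotient singularity whose type I would compute to be $(3,2)$, resolved by a Hirzebruch--Jung string of type $(-2)(-2)$ (this is the standard resolution of the $\frac{1}{3}(1,2)$ singularity). Because $\tau$ can be realized in three Galois-conjugate ways inside $D$ (exactly as $\zeta$ was via $\zeta,\zeta^\sigma,\zeta^{\sigma\sigma}$ in the proof of Proposition \ref{ramif}), I would obtain three $\tilde\Gamma$-inequivalent such fixed points, producing the three singularities $P_1,P_2,P_3$ of type $(3,2)$.

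For the remaining singularity $Q$, I would track what happens to the three points $Q_1,Q_2,Q_3$ of $X_\Gamma$ (the type-$(7,3)$ singularities of Proposition \ref{ramif}) under the quotient by $\langle\tau\rangle$. Since $\tau$ normalizes $\Gamma$ and conjugates $\zeta$ to its Galois conjugates, the order-$3$ action permutes the three points $Q_1,Q_2,Q_3$ cyclically and identifies them to a single point $Q$ in $X_{\tilde\Gamma}$; the stabilizer there is still the order-$7$ cyclic group, so $Q$ remains a $(7,3)$ singularity with the same $(-3)(-2)(-2)$ resolution string $A_1+A_2+A_3$. Finally, to confirm there are no further singularities or branch curves, I would invoke the same two arguments as before: the nonexistence of branch curves follows because a fixing disc would force a quaternion subalgebra in $D$, impossible for dimension reasons, and the completeness of the list can be checked against the multiplicativity of Holzapfel's orbital Euler height under the degree-$3$ cover $\varphi$.

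The main obstacle I expect is the bookkeeping of how $\tau$ acts on the three $(7,3)$ points and on its own fixed locus simultaneously — in particular verifying that $\tau$ has no fixed point coinciding with any $Q_i$ (which would create a larger, non-cyclic stabilizer) and confirming the precise local type $(3,2)$ rather than some other $\frac13(1,a)$ singularity. This requires a careful local eigenvalue computation of the combined action near each fixed point, analogous to the $\langle m_\zeta\rangle$-analysis around $e_1$ in the previous proof; the phrase ``in the same way as in lemma \ref{ramif}'' signals that the author intends exactly this routine-but-delicate local linearization, which is where all the real content sits.
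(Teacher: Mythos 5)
Your outline matches the paper's proof for two of the three ingredients: the identification of $Q$ (using that $\tau$ permutes $x$, $x^{\sigma}$, $x^{\sigma\sigma}$ cyclically, so $Q_1,Q_2,Q_3$ collapse to a single $(7,3)$-point) and the local analysis at the fixed point of $\tau$ itself, which the paper also carries out via the positive-definite eigenline of $\tau$ (eigenvalue $\omega$, normal action $\mathrm{diag}(\omega,\omega^2)$, type $(3,2)$). However, there is a genuine gap in how you produce $P_2$ and $P_3$. You claim that $\tau$ ``can be realized in three Galois-conjugate ways inside $D$, exactly as $\zeta$ was via $\zeta,\zeta^{\sigma},\zeta^{\sigma\sigma}$.'' That mechanism has no counterpart here: the trick for $\zeta$ works because $\zeta$ generates the cyclic extension $L/K$ and $\sigma\in\gal(L/K)$ supplies two further $K$-embeddings $\zeta\mapsto\zeta^{\sigma},\zeta^{\sigma\sigma}$ of the same cyclotomic field into $D$. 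The element $\tau$ corresponds (projectively) to $u$ with $u^3=\alpha\in K^{\ast}$, and the subfield $K(u)\cong K(\sqrt[3]{\alpha})$ is not Galois over $K$ since the primitive cube root of unity does not lie in $K=\QQ(\sqrt{-7})$; the would-be conjugates $\omega u,\omega^2 u$ do not live in $D$. So the argument that produced three inequivalent fixed points for the order-$7$ elements simply does not transfer, and the extra order-$3$ branch points come from other conjugacy classes of order-$3$ elements of $\tilde\Gamma$ (elements of the form $\gamma\tau^{\pm1}$ with $\gamma\in\Gamma$) that are not exhibited explicitly.

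The paper closes this gap indirectly: after excluding branch curves (as you do), it uses the multiplicativity of Holzapfel's orbital invariants under the degree-$3$ cover $\varphi$. From $\e(X_{\Gamma})=3/7=3\,\e(X_{\tilde\Gamma})$, $e(X_{\tilde\Gamma})=3$ and $\e(X_{\tilde\Gamma})=3-6/7-\sum_{k=1}^{r}(1-1/d_k)$ one reads off that necessarily $r=3$ and $d_k=3$, which \emph{proves} the existence of exactly two further branch points besides $P_1$; the signature height $\ttt(X_{\Gamma})=3\,\ttt(X_{\tilde\Gamma})$ is then used to pin down their type as $(3,2)$ rather than some other $\frac13(1,a)$. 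In your write-up the Euler-height identity appears only as an a posteriori completeness check, but in the paper it is the actual existence argument for $P_2,P_3$ and (together with the signature height) the determination of their singularity type. You would need to promote that counting argument to the main step, or else supply an explicit construction of the two missing conjugacy classes of order-$3$ elements, which the Galois-conjugation analogy cannot provide.
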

\begin{proof}
Let $g\in D$ be the element introduced at the begining of section \ref{struct}. Using the relation $\tau g=g^{\sigma}\tau$, which in fact holds for any $g\in D$, it is directly checked that $\tau$ permutes the three lines $x,\ x^{\sigma}$ and $x^{\sigma\sigma}$ which are fixed by $Z$. Consequently the three singular points $Q_1,Q_2,Q_3$ of $X_{\Gamma}$ are mapped to one single point $Q\in X_{\tilde\Gamma}$ by $\varphi$. This point remains a quotient singularity of type $(7,3)$. There is at least one singularity more, call it $P_1$, coming from the positive definite eigenline of $\tau$ corresponding to the eigenvalue $\omega=\frac{1}{2}(-1+\sqrt{-3})$. It is a quotient singularity of type $(3,2)$ since around it $\tau$ acts as $diag(\omega,\omega^2)$. In order to show that there are two more singularities, we make use of the relation between orbifold invariants of $X_{\Gamma}$ and $X_{\tilde\Gamma}$. We know namely that $\e(X_{\Gamma})=3\e(X_{\tilde\Gamma})$. Furthermore, the (topological) Euler number $e(X_{\tilde\Gamma})$ equals $3$. In the same way as in the proof of proposition \ref{ramif} we exclude branch curves. Then by definition $\e(X_{\tilde\Gamma})=3-6/7-\sum_{k=1}^{r} (1-1/d_k)$, where $r$ denotes the number of elliptic branch points $\neq Q$, and $d_k$ appears in the type $(d_k,e_k)$ of the k-th branch point. On the other hand $\e(X_{\Gamma})=3/7=3\e(X_{\tilde\Gamma})$. This holds only if $r=3$ and $d_k=3$ for all $k$, as a short calculation shows. This gives two further branch points $P_2$ and $P_3$. Doing the same type of argumentation with the signature height we conclude that all branch points, not not of type $(7,3)$ must be of type $(3,2)$. Namely, assuming the opposite we always get a contradiction to the equation $\ttt(X_\Gamma)=3\ttt(X_{\tilde\Gamma})$. 
\end{proof}

Again, we can ask about the structure of the minimal desingularization $\widetilde{X_{\tilde\Gamma}}$ of $X_{\tilde\Gamma}$ as we did before for $X_{\Gamma}$. With the same methods used in the investigation of $X_{\Gamma}$ we get 
\begin{proposition}
\label{NG}
$\widetilde{X_{\tilde\Gamma}}$ is a minimal elliptic surface of Kodaira dimension one with $p_g=q=0$.  
\end{proposition}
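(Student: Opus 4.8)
The plan is to transcribe, almost verbatim, the three-step analysis of $\widetilde{X_\Gamma}$ (Propositions \ref{top} and \ref{ireggeom}, Lemma \ref{dim}, Proposition \ref{pluri}, Corollary \ref{kod}), now feeding in the singularity data of Lemma \ref{singng}: one cyclic point of type $(7,3)$ over $Q$ and three points of type $(3,2)$ over $P_1,P_2,P_3$. First I would fix the topological invariants as in Proposition \ref{top}. The relations $\e(X_\Gamma)=3\,\e(X_{\tilde\Gamma})$ and $\ttt(X_\Gamma)=3\,\ttt(X_{\tilde\Gamma})$ already used in the proof of Lemma \ref{singng}, together with $\e(X_\Gamma)=3/7$ and $\ttt(X_\Gamma)=1/7$, give $\e(X_{\tilde\Gamma})=1/7$ and $\ttt(X_{\tilde\Gamma})=1/21$; Holzapfel's formulas (\cite{Ho:bsa}, prop.~3.3.4 and 3.4.3) fed with the above singularity types recover the invariants $e(X_{\tilde\Gamma})=3$ and $\sign(X_{\tilde\Gamma})=1$ of the singular surface. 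The minimal resolution is again a composition of $3+3\cdot2=9$ monoidal transformations (three curves over $Q$, two over each $P_i$), so $c_2(\widetilde{X_{\tilde\Gamma}})=3+9=12$ and $\sign(\widetilde{X_{\tilde\Gamma}})=1-9=-8$, whence $c_1^2(\widetilde{X_{\tilde\Gamma}})=0$ and $\chi(\widetilde{X_{\tilde\Gamma}})=1$ by the signature theorem and Noether's formula.

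Next, mirroring Proposition \ref{ireggeom}, the identity $\chi(\widetilde{X_{\tilde\Gamma}})=1-q+p_g=1$ reduces the claim $q=p_g=0$ to $p_g(\widetilde{X_{\tilde\Gamma}})=0$. Since $X_{\tilde\Gamma}=X_\Gamma/\langle\tau\rangle$, the holomorphic $2$-forms on $X_{\tilde\Gamma}$ are exactly the $\langle\tau\rangle$-invariant $2$-forms on $X_\Gamma$; but the space of $2$-forms on $X_\Gamma$ is already zero, being isomorphic to $\Omega^2_{\widetilde{X_\Gamma}}$ with $p_g(\widetilde{X_\Gamma})=0$ by Proposition \ref{ireggeom}. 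As all singularities of $X_{\tilde\Gamma}$ are quotient (hence rational) singularities, $\Omega^2_{\widetilde{X_{\tilde\Gamma}}}\cong\Omega^2_{X_{\tilde\Gamma}}$ by \cite{Gri}, so $p_g(\widetilde{X_{\tilde\Gamma}})=0$.

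It remains to determine $\kod(\widetilde{X_{\tilde\Gamma}})$, following Lemma \ref{dim}, Proposition \ref{pluri} and Corollary \ref{kod}. I would first re-establish $P_k(\widetilde{X_{\tilde\Gamma}})=\dim A_k(\tilde\Gamma,j)$ for $k=2,3$: over the $(7,3)$-point the adjunction and holomorphic-continuation argument of Lemma \ref{dim} applies unchanged, while the three $(3,2)$-points are $A_2$ Du Val (canonical) singularities, across whose $(-2)(-2)$-strings pluricanonical sections extend automatically. I would then evaluate Hirzebruch's formula (\ref{hirform}) for $\tilde\Gamma$ to obtain $P_2$ and $P_3$. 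As in Corollary \ref{kod}, a value $P_2\le1$ excludes general type via Riemann--Roch, $q=0$ together with $P_2\ge1$ excludes rationality by Castelnuovo's criterion, and a plurigenus exceeding $1$ excludes Kodaira dimension $0$ (in particular the Enriques case, which shares the invariants $c_2=12$, $\chi=1$, $p_g=q=0$ but has $P_3=0$); minimality then follows from $c_2(\widetilde{X_{\tilde\Gamma}})=12$, as in \cite{bhpv}.

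I expect the genuine obstacle to be this last step: setting up and evaluating Hirzebruch's dimension formula (\ref{hirform}) for $\tilde\Gamma$, whose conjugacy classes of elements with fixed points now include the order-$3$ element $\tau$, and its interaction with the powers of $\zeta$, in addition to those already present for $\Gamma$. The new local fixed-point data enter the factors $R(r(\delta),k)$ and $j_\delta$, and one must verify that the resulting plurigenera land in the narrow window $P_2\le1$ yet some $P_k>1$ that simultaneously rules out Kodaira dimensions $2$ and $0$. By contrast, the topological and Hodge-theoretic steps above are essentially mechanical transcriptions of the $X_\Gamma$ computation.
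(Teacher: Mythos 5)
Your proposal follows the paper's proof essentially verbatim: the same topological computation via Euler and signature heights and the nine exceptional curves, the same reduction of $p_g=q=0$ to $\chi=1$ plus invariant $2$-forms, and the same identification of $P_2,P_3$ with $\dim A_k(\tilde\Gamma,j)$ evaluated by Hirzebruch's formula (\ref{hirform}). One calibration: the paper finds $P_2(\widetilde{X_{\tilde\Gamma}})=P_3(\widetilde{X_{\tilde\Gamma}})=1$, so no plurigenus exceeds $1$ and Kodaira dimension $0$ is excluded exactly by the Enriques observation you make parenthetically ($P_3=1\neq 0$, whereas an Enriques surface has $P_3=0$), not by a plurigenus greater than one.
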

\begin{proof}
The topological invariants are computed using the Euler- and Signature height presented in the proof of proposition \ref{top} and lemma \ref{singng}. We get the same topological invariants as in proposition \ref{top}: $e(\widetilde{X_{\tilde\Gamma}})=12$, $\tau(\widetilde{X_{\tilde\Gamma}})=-8$. The assertion about the irregularity and the geometric genus follows directly from the (proof of) proposition \ref{ireggeom}, since we have $\chi(\widetilde{X_{\tilde\Gamma}})=1$ again. Lastly, we can apply Hirzebruch's formula in order to compute the second and third plurigenus, since the proof of lemma \ref{dim} works in the present case without any change. Therefore, we can identify these plurigenera with the dimensions $\dim A_k(\tilde\Gamma,j)$ of the corresponding spaces of automorphic forms. By elementary calculations, (\ref{hirform}) leads to $P_2(\widetilde{X_{\tilde\Gamma}})= P_3(\widetilde{X_{\tilde\Gamma}})=1$. A slightly modified argumentation in the proof of corollary \ref{kod} verifies the asserted Kodaira dimension (notice that $P_3=1$ and $p_g=q=0$ is not possible in Kodaira dimension 0).
\end{proof}

\subsection{Elliptic fibration}
We have to mention, that alternatively to the approach we have described, for the proof of proposition \ref{NG} we can completely refer to \cite{Ishida}, some of whose arguments we have already used before. There, the author a priori works over a non-archimedian field, but most of his arguments work independently of it. Morover, in \cite{Ishida}, section 4, the singular fibers of the elliptic fibration on $X_{\tilde\Gamma}$ are completely determined. The non-multiple singular fibers are closely related to the exceptional curves on $\widetilde{X_{\tilde\Gamma}}$.To be precise, we have 
\begin{theorem}[compare \cite{Ishida}, section 4]
$\widetilde{X_{\tilde\Gamma}}$ admits an elliptic fibration $f$ over $\PP_1$. $f$ has exactly one multiple fiber of multiplicity 2 and one multiple fiber of multiplicity 3. Furthermore, it has four non-multiple singular fibers, all of type $I_3$ (in Kodaira's notation) $B_0=A_2+A_3+D_0$, $B_1=F_{1,1}+F_{1,2}+D_1$, $B_2=F_{2,1}+F_{2,2}+D_2$, $B_3=F_{3,1}+F_{3,2}+D_3$. 
\end{theorem}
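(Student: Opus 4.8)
The plan is to recognise the elliptic fibration as the Iitaka fibration of $\widetilde{X_{\tilde\Gamma}}$ and then to read its singular fibres off the resolution data of Lemma \ref{singng}. By Proposition \ref{NG}, $\widetilde{X_{\tilde\Gamma}}$ is a minimal elliptic surface of Kodaira dimension one; hence for $n\gg 0$ the pluricanonical map $|nK|$ realises a fibration $f\colon\widetilde{X_{\tilde\Gamma}}\to C$ over a smooth curve whose general fibre is an elliptic curve. Because any elliptic fibration satisfies $g(C)\le q(\widetilde{X_{\tilde\Gamma}})=0$, the base is $C\cong\PP_1$, which gives the first assertion.

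Next I would locate the fibre components inside the exceptional locus of the minimal resolution $\widetilde{X_{\tilde\Gamma}}\to X_{\tilde\Gamma}$. Each of the curves $A_2,A_3$ over $Q$ and $F_{i,1},F_{i,2}$ over $P_i$ is a smooth rational $(-2)$-curve, so $K\cdot C=-2-C^2=0$ and it is a fibre component; the $(-3)$-curve $A_1$, on the other hand, has $K\cdot A_1=-2-A_1^2=1$ and is therefore horizontal, i.e.\ a multisection. These eight $(-2)$-curves form the four chains $\{A_2,A_3\}$ and $\{F_{i,1},F_{i,2}\}$ ($i=1,2,3$), each a pair of components meeting transversally in one point. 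Such a pair cannot by itself be a complete fibre, since $(A_2+A_3)^2=-2\ne 0$, so every chain lies in a singular fibre of Euler number at least $3$. Granting that the four chains lie in four distinct fibres (see below), the identity $c_2(\widetilde{X_{\tilde\Gamma}})=12=\sum_{\text{sing.\ fibres}}e$ of Proposition \ref{NG} forces each of these fibres to have Euler number exactly $3$, hence to be of Kodaira type $I_3$, and leaves no room for other singular fibres; closing each chain by one extra $(-2)$-component $D_i$ then yields $B_0=A_2+A_3+D_0$ and $B_i=F_{i,1}+F_{i,2}+D_i$.

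It remains to determine the multiple fibres, which by the Euler count must be multiples of smooth elliptic curves. The canonical bundle formula over $\PP_1$,
\[
K_{\widetilde{X_{\tilde\Gamma}}}=f^{*}\mathcal L\otimes\OO\Bigl(\sum_i(m_i-1)F_i\Bigr),\qquad \deg\mathcal L=-2+\chi(\OO_{\widetilde{X_{\tilde\Gamma}}})=-1,
\]
gives $P_n=h^{0}\bigl(\PP_1,\OO(-n+\sum_i\lfloor n(m_i-1)/m_i\rfloor)\bigr)$; substituting $P_2=P_3=1$ from Proposition \ref{NG} forces exactly two multiple fibres, of multiplicities $2$ and $m$ with $m\ge 3$. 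Integrality of the intersection numbers of the multisection $A_1$ with the multiple fibres (which makes $m$ divide $A_1\cdot F=2m/(m-2)$) narrows this to $m\in\{3,4\}$. I expect the genuine obstacle to be precisely this final separation — deciding $m=3$ versus $m=4$, together with the verification that the four chains really lie in four distinct fibres — since the invariants computed so far do not distinguish these possibilities. This is where I would invoke the explicit identification of $\widetilde{X_{\tilde\Gamma}}$ with Ishida's $p$-adically uniformised surface \cite{Ishida}, on which the fibration and all singular fibres are determined directly; matching its fibre components with the curves $A_j,F_{i,j},D_i$ above then yields $m=3$ and completes the proof.
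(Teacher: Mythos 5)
Your route is genuinely different from the paper's, for the simple reason that the paper gives no proof of this theorem at all: it imports the fibration and its singular fibres wholesale from Ishida's Section 4, remarking only that his $p$-adic arguments carry over to the complex surface. You instead reconstruct most of the statement internally from Lemma \ref{singng} and Proposition \ref{NG}: the Iitaka fibration over $\PP_1$ (via $g(C)\le q=0$), the dichotomy $K\cdot C=-2-C^2$ separating the eight $(-2)$-curves (fibre components) from the $(-3)$-curve $A_1$ (multisection), the Euler budget $\sum e(F_s)=c_2=12$ together with $(A_2+A_3)^2=-2\neq 0$ forcing each chain to sit in a fibre of Euler number exactly $3$ and hence of type $I_3$, and the canonical bundle formula with $P_2=P_3=1$ forcing exactly two multiple fibres of multiplicities $2$ and $m\ge 3$; all of these steps are correct. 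What your argument buys is a proof that is intrinsic to the complex geometry and makes transparent exactly how much of the theorem follows from the invariants alone. You also correctly isolate the two points those invariants do not settle: (i) that the four chains lie in four \emph{distinct} fibres --- the count $12=6+3+3$ would equally tolerate two disjoint chains sharing an $I_6$ --- and (ii) the choice $m=3$ versus $m=4$, where your divisibility constraint $(m-2)\mid 2$ from $F_2\cdot A_1\in\mathbb{Z}$ is sharp but not conclusive. Deferring precisely these two residues to Ishida's explicit determination is legitimate and is strictly less than what the paper itself defers; if you wanted to avoid the citation entirely, the natural tool for both gaps is the degree-$3$ covering $X_{\Gamma}\to X_{\tilde\Gamma}$ and the known fibration on $\widetilde{X_{\Gamma}}$ (Keum's $I_9+3I_1$ configuration with multiplicities $2$ and $3$), which pins down the base orbifold and the fibre over each $P_i$ directly.
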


We can now use the knowledge of the elliptic fibration on $X_{\tilde\Gamma}$ to reconstruct the elliptic fibration on $X_{\Gamma}$. Since we know the finite covering $\varphi$, this is not a difficulty anymore. Again the non-multiple singular fibers contain the exceptional curves. For the proof of the next theorem we can also refer to \cite{keum} whose starting point was exactly the determination of the elliptic fibration.
\begin{theorem}[see \cite{keum}, proposition 2.~1.]
The elliptic fibration $g$ on $X_{\Gamma}$ over $\PP_1$ has exactly two multiple fibers, one of multiplicity two and one of multiplicity three. It has four non-multiple singular fibers, one of type $I_9$: $C_0=E_{1,2}+E_{1,3}+E_{2,2}+E_{2,3}+E_{3,2}+E_{3,3}+D_{1,0}+D_{2,0}+D_{3,0}$, and three of type $I_1$: $A_i=D^{'}_i$, $i=1,2,3$. There $D^{'}_i$ is the inverse image of $D_i$ under $\varphi$.
\end{theorem}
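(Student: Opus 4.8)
The plan is to obtain $g$ as the canonical elliptic fibration of $\widetilde{X_{\Gamma}}$ and to read off its fibres from those of $f$ through the order-$3$ quotient $\varphi$. Since $\widetilde{X_{\Gamma}}$ is a minimal properly elliptic surface with $q=p_g=0$ (corollary \ref{kod}, proposition \ref{ireggeom}), its Iitaka fibration $g\colon\widetilde{X_{\Gamma}}\to\PP_1$ is unique, with connected fibres and rational base (the base is $\PP_1$ because $q=0$). Being canonical it is $\tau$-equivariant, and the quotient fibration on $\widetilde{X_{\Gamma}}/\langle\tau\rangle$, after resolving the three singular points $P_i$ of type $(3,2)$, is $f$. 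The first thing I would pin down is the induced action of $\tau$ on the base, and I claim it is trivial, i.e.\ $\tau$ is a translation by a $3$-torsion section of $g$. Each preimage $\varphi^{-1}(\text{fibre of }f)$ is a $\tau$-stable fibre of $g$; once the local analysis below exhibits $C_0=\varphi^{-1}(B_0)$ and the three $D_i'=\varphi^{-1}(D_i)$ as four distinct such fibres, sitting over four distinct base points, an order-$3$ automorphism of $\PP_1$ can fix those four points only if it is the identity. Then $\varphi$ carries the fibre of $g$ over $t$ onto the fibre of $f$ over the same $t$, and $\tau$, being fixed-point free on a general fibre (its fixed points $p_1,p_2,p_3$ are isolated), acts there as a $3$-torsion translation.

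With this in hand I would transport the six singular fibres of $f$ from the previous theorem. Over the point of $B_0=A_2+A_3+D_0$ the singularity $Q$ has the three \'etale preimages $Q_1,Q_2,Q_3$ cyclically permuted by $\tau$ (proposition \ref{ramif}); hence $\varphi^{-1}(A_2)=E_{1,2}+E_{2,2}+E_{3,2}$, $\varphi^{-1}(A_3)=E_{1,3}+E_{2,3}+E_{3,3}$ and $\varphi^{-1}(D_0)=D_{1,0}+D_{2,0}+D_{3,0}$, so the connected triple cover of the $I_3$-cycle $B_0$ is the $I_9$-cycle $C_0$. Over each point of $B_i=F_{i,1}+F_{i,2}+D_i$, by contrast, the fibre runs through the quotient singularity $P_i$ (lemma \ref{singng}), whose sole preimage is the fixed point $p_i$. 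Because a $3$-torsion translation fixes precisely the node of a nodal rational fibre, $D_i'=\varphi^{-1}(D_i)$ is irreducible with a single node at $p_i$, i.e.\ an $I_1$-fibre $A_i$; dually, resolving the surface singularity $P_i$ separates the two branches of that node, inserts the chain $F_{i,1}-F_{i,2}$, and converts the $I_1$ into the triangle $B_i$. This is exactly why $F_{i,1},F_{i,2}$ have no curve-preimage on $\widetilde{X_{\Gamma}}$, where $p_i$ is already smooth. The two multiple fibres of $f$ lie away from the $p_i$ and the $Q_j$, where $\varphi$ is \'etale with connected fibre-preimage, so they pull back to multiple fibres of the same multiplicities $2$ and $3$.

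Finally I would check completeness. A smooth fibre of $f$ has \'etale, hence smooth, genus-one preimage, which is connected because $g$ has connected fibres, so every singular fibre of $g$ sits over one of the six distinguished base points; the Euler number then seals the count, since $e(\widetilde{X_{\Gamma}})=12$ by proposition \ref{top} equals $e(I_9)+3\,e(I_1)=9+3$, leaving Kodaira's list no room for anything further and fixing the types. The step I expect to be the main obstacle is precisely the local comparison at the two kinds of special point: justifying that $\tau$ operates fibrewise as a $3$-torsion translation, and carrying out the resolution bookkeeping that turns $B_i$ (through $P_i$) into the irreducible $I_1$-fibre $A_i$ while turning $B_0$ (through $Q$) into the connected $I_9$-fibre $C_0$.
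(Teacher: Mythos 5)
Your proposal is correct and follows exactly the route the paper intends: transport the known fibration of $\widetilde{X_{\tilde\Gamma}}$ through the degree-$3$ quotient $\varphi$, matching $B_0$ (through $Q$, with three unramified preimages $Q_1,Q_2,Q_3$) to the $I_9$-cycle $C_0$ and each $B_i$ (through the branch point $P_i$) to an irreducible $I_1$-fibre, with the Euler number $12=9+3\cdot 1$ closing the count. The paper itself gives essentially no argument here (it defers to Keum and to the explicit covering $\varphi$), so your write-up supplies the missing details; the only phrasing to tighten is the identification of $D_i'$ as nodal, which should be deduced from the Euler-number/Kodaira-type computation you already give rather than assumed before invoking the $3$-torsion translation.
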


\bibliography{literatura}

\begin{thebibliography}{BHPVdV04}

\bibitem[BHPVdV04]{bhpv}
Wolf~P. Barth, Klaus Hulek, Chris A.~M. Peters, and Antonius Van~de Ven.
\newblock {\em Compact complex surfaces}.
\newblock Ergebnisse der Mathematik und ihrer Grenzgebiete. 3.~Folge, Bd.~4.
  Springer-Verlag, Berlin, 2.~ed edition, 2004.

\bibitem[BP89]{prasbo}
Armand Borel and Gopal Prasad.
\newblock Finiteness theorems for discrete subgroups of bounded covolume in
  semi-simple groups.
\newblock {\em Inst. Hautes \'Etudes Sci. Publ. Math.}, (69):119--171, 1989.

\bibitem[BR00]{br}
Don Blasius and Jonathan Rogawski.
\newblock Cohomology of congruence subgroups of {${\rm SU}(2,1)\sp p$} and
  {H}odge cycles on some special complex hyperbolic surfaces.
\newblock In {\em Regulators in analysis, geometry and number theory}, volume
  171 of {\em Progr. Math.}, pages 1--15. Birkh\"auser Boston, Boston, MA,
  2000.

\bibitem[Gri76]{Gri}
Phillip~A. Griffiths.
\newblock Variations on a theorem of {A}bel.
\newblock {\em Invent. Math.}, 35:321--390, 1976.

\bibitem[Hir66]{Hir:ell}
Friedrich Hirzebruch.
\newblock Elliptische {D}ifferentialoperatoren auf {M}annigfaltigkeiten.
\newblock In {\em Festschr. Ged\"achtnisfeier K. Weierstrass}, pages 583--608.
  Westdeutscher Verlag, Cologne, 1966.

\bibitem[Hol98]{Ho:bsa}
Rolf-Peter Holzapfel.
\newblock {\em Ball and surface arithmetics}.
\newblock Aspects of Mathematics, E29. Friedr. Vieweg \& Sohn, Braunschweig,
  1998.

\bibitem[IK98]{Ishida-Kato}
Masa-Nori Ishida and Fumiharu Kato.
\newblock The strong rigidity theorem for non-{A}rchimedean uniformization.
\newblock {\em Tohoku Math. J. (2)}, 50(4):537--555, 1998.

\bibitem[Ish88]{Ishida}
Masa-Nori Ishida.
\newblock An elliptic surface covered by {M}umford's fake projective plane.
\newblock {\em Tohoku Math. J. (2)}, 40(3):367--396, 1988.

\bibitem[Kat08]{Kat}
Fumiharu Kato.
\newblock On the {S}himura variety having {M}umford's fake projective plane as
  a connected component.
\newblock {\em Math. Z.}, 259(3):631--641, 2008.

\bibitem[Keu06]{keum}
JongHae Keum.
\newblock A fake projective plane with an order 7 automorphism.
\newblock {\em Topology}, 45(5):919--927, 2006.

\bibitem[Mum79]{mum2}
David Mumford.
\newblock An algebraic surface with {$K$} ample, {$(K\sp{2})=9$},
  {$p\sb{g}=q=0$}.
\newblock {\em Amer. J. Math.}, 101(1):233--244, 1979.

\bibitem[Pra89]{pras}
Gopal Prasad.
\newblock Volumes of {$S$}-arithmetic quotients of semi-simple groups.
\newblock {\em Inst. Hautes \'Etudes Sci. Publ. Math.}, (69):91--117, 1989.

\bibitem[PY07]{PS}
Gopal Prasad and Sai-Kee Yeung.
\newblock Fake projective planes.
\newblock {\em Invent. Math.}, 168(2):321--370, 2007.

\bibitem[Rei03]{reiner}
Irving Reiner.
\newblock {\em Maximal orders}.
\newblock London Mathematical Society Monographs. New Series, 28. The Clarendon
  Press Oxford University Press, Oxford, 2003.

\bibitem[Rie70]{Rie}
Carl Riehm.
\newblock The norm 1 group of a $\mathfrak p$-adic division algebra.
\newblock {\em Amer. J. Math.}, 92:499--523, 1970.

\bibitem[Tit79]{Tits}
Jacques Tits.
\newblock Reductive groups over local fields.
\newblock In {\em Automorphic forms, representations and $L$-functions (Proc.
  Sympos. Pure Math., Oregon State Univ., Corvallis, Ore., 1977), Part 1},
  Proc. Sympos. Pure Math., XXXIII, pages 29--69. Amer. Math. Soc., Providence,
  R.I., 1979.

\end{thebibliography}
\bibliographystyle{alpha}
\end{document}